\newcommand{\R}{\mathbb{R}}
\newcommand{\Z}{\mathbb{Z}}
\DeclareMathOperator{\PH}{PH}
\newcommand{\Str}{\mathbf{Str}}
\newcommand{\data}{\mathbf{data}}
\newcommand{\per}{\mathbf{Per}}
\newcommand{\branch}{\mathbf{Br}}
\newcommand{\branchA}{\mathbf{Br}_\alpha}
\newcommand{\branchB}{\mathbf{Br}_\beta}
\newcommand{\branchC}{\mathbf{Br}_\gamma}
\newcommand{\branchAB}{\mathbf{Br}_{\alpha\beta}}
\newcommand{\branchAC}{\mathbf{Br}_{\alpha\gamma}}
\newcommand{\branchBC}{\mathbf{Br}_{\beta\gamma}}
\numberwithin{equation}{section}
\theoremstyle{plain}
\newtheorem{theorem}[equation]{Theorem}
\newtheorem{corollary}[equation]{Corollary}
\newtheorem{proposition}[equation]{Proposition}
\newtheorem{lemma}[equation]{Lemma}
\newtheorem{substuff}{\bf Remark}[equation] %for numbering
\theoremstyle{definition}
\newtheorem{definition}[equation]{Definition}
\newtheorem{example}[equation]{Example}
\newtheorem{construction}[equation]{Construction}
\theoremstyle{remark}
\newtheorem{subremark}[substuff]{Remark} %for numbering
\begin{document}
\title{Persistent homology with non-contractible preimages}
\date{\today}

\author{Konstantin Mischaikow}
\address{Math.\ Dept., Rutgers University, New Brunswick, NJ 08901, USA}
\email{mischaik@math.rutgers.edu}
\thanks{Mischaikow  was 
partially supported by NSF grants  1521771, 1622401, 1839294, 1841324, 1934924, by NIH-1R01GM126555-01 as part of the Joint DMS/NIGMS Initiative to Support Research at the Interface of the Biological and Mathematical Science, and a grant from the Simons Foundation.}

\author{Charles Weibel}
\address{Math.\ Dept., Rutgers University, New Brunswick, NJ 08901, USA}
\email{weibel@math.rutgers.edu}\urladdr{http://math.rutgers.edu/~weibel}
\thanks{Weibel was supported by NSF grants, and the Simonyi Endowment at IAS}

\begin{abstract} 
For a fixed $N$, we analyze the space of all sequences 
$z=(z_1,\dots,z_N)$, approximating a continuous function on the circle,
with a given persistence diagram $P$, and show that % Our main result is that
the typical components of this space are homotopy equivalent to $S^1$.
We also consider the space of functions on $Y$-shaped
(resp., star-shaped) trees with a 2-point persistence diagram, and show that this space is homotopy equivalent to $S^1$
(resp., to a bouquet of circles).
\end{abstract} 
%AMS Subject Codes: Primary 55N31;; Secondary 18F60, 55P15
\maketitle

\section*{Introduction}
Topological Data Analysis (TDA) is a rapidly developing body of techniques for 
the analysis of high dimensional data associated with nonlinear structures.
Persistent homology  has become one of the primary tools in TDA, for reasons
including efficiency of computation, robustness with respect to perturbations in the data, and dramatic data compression.
The focus of this paper is on understanding the loss of information due to this compression.

To the best of our knowledge, all applications of persistent homology 
to experimental data can be characterized as follows.
There exists a finite simplicial complex $K$ and a 
function $\varphi: K\to \R$ such that 
each sublevel set $K_r=\varphi^{-1}((-\infty,r])$ defines a subcomplex of $K$.
With this input, the persistent homology algorithm outputs  persistence diagrams $P = \{P_i\}$, 
one for each dimension of homology; we will 
view them as point clouds in $\R^2$.

With regard to compression, an obvious question is: given a fixed complex $K$ 
and persistence diagrams $\{P_i\}$, 
what is the "preimage" space of functions $\varphi$ that produce these diagrams? 
A detailed exposition of the polyhedral structure of these 
preimages is given in \cite{LT}. %{leygonie:tillmann}. 
While this geometric structure is clearly valuable, it does not necessarily translate into an understanding of 
what information is lost due to compression. 

We will  consider the slightly more restrictive setting in which $\varphi:K\to\R$ is completely determined at $\sigma$ by its values 
on the set of adjacent vertices of $K$ by 
the formula $\varphi(\sigma) = \max_{v\prec \sigma}\varphi(v)$.
If $K$ has $N$ vertices, then $\varphi$ can be characterized by the vector 
$z=(z_1,\ldots, z_N)\in\R^N$.
Therefore, for fixed $K$ we can view persistent homology as inducing a function
\[
PH\colon \R^N \to \per,
\]
where $\per$ denotes the space of persistence diagrams. 
Given $P\in \per$, our long term goal is to understand the preimage 
$\data_P=PH^{-1}(P)$ as a subset of $\R^N$.

In \cite{CMW}, %{cyranka:mischaikow:weibel} 
we considered the case where $K$ is a simplicial complex representing an interval, 
i.e.,\ $K$ consists of $N$ vertices, $N-1$ edges, and each vertex is the boundary of at most two edges.
The primary result is that %given $P\in\per$,
each component of $\data_P$ %$PH^{-1}(P)$ 
is contractible (for the number of components see \cite{curry}).
The work in \cite{CMW} %{{cyranka:mischaikow:weibel} 
was motivated in part to apply topological fixed point theorems to 
nonlinear dynamics tracked via persistence diagrams.
In this setting, contractibility is a sufficiently strong condition, and thus, 
the collapse of geometry  does not result in a loss of  information.
We hasten to remark that, even in this simple setting, 
the geometry of $\data_P$ %$PH^{-1}(P)$ 
is nontrivial (see \cite{LT}). %{leygonie:tillmann}). 
However, the machinery %(once established) 
of \cite{CMW} that determines the topology of the preimage is fairly simple and thus worth pursuing in and of itself.

We consider two families of examples in this paper.
The first is where $K$ is an $N$-gon, representing a circle, 
and the second is where $K$ is a simple star-like tree, 
with at least three branches.
In the first case, the typical components  of $\data_P$ %$PH^{-1}(P)$
are homotopic to a circle, reflecting the topology of $K$;
see Corollary~\ref{cor:circle}.
%typically the topology of a component 
%As the reader might suspect, and as is stated in Corollary~\ref{cor:circle}, 
% that is inherited from.
The reader might find it surprising that in the second case,
when $K$ is a tree with only 3 branches, each of length at least two, then the preimage
$\data_P$ is homotopic to a circle;
adding more branches yields
a preimage which is homotopic to a bouquet of many circles 
(see Sections~\ref{sec:Y}--\ref{sec:starlike}).

Though based on the same machinery, the details of the proofs of these two examples are largely independent. 
As a result it is easy to see that for an arbitrary graph $K$  the topology of $\data_P$
%$PH^{-1}(P)$ 
can be quite complicated.
In fact, we limit ourselves to presenting these two examples precisely because we do not have a general description of the homotopy type of $\data_P$ 
%$PH^{-1}(P)$ 
based on $K$ and $P$. 

This paper is organized into two parts.  
The first part of the paper considers the circle, modelled by the
$N$-gon. In Sections \ref{sec:components} and \ref{sec:Str},
we discuss critical value sequences
and the poset $\Str$ of cellular strings. 
%in Sections \ref{sec:components} and \ref{sec:Str}.
In Section \ref{sec:polytopes}, we show that the connected components
of a typical component are homeomorphic to the geometric realization
of $\Str$, and
analyze the homotopy type of various sub-posets of $\Str$
in Section \ref{sec:homotopy}, via a series of simple moves.
Finally, we prove the
main result in Section \ref{sec:main}.

The second family of examples is studied in Section \ref{sec:Y} (3 branches) and 
Section \ref{sec:starlike} (many branches).
The nontrivial topology in the preimage $\data_P$ arises 
from  the fact that there is a larger family of moves.

\section{Preliminaries}
\label{sec:prel}

As indicated in the introduction,
we consider a 1-dimensional complex $K$ and
%restrict our attention to complexes $K$ that are finite graphs and we 
fix a labelling of the vertices by  $i=1,\ldots, N$.
We consider functions $\varphi \colon K\to \R$ determined by the values of $\varphi$ on the vertices, 
i.e.,\ $\varphi(\sigma) = \max_{i\prec \sigma} \varphi(i)$.
This allows us to represent $\varphi$ as a vector $z=(z_1,...,z_{N})\in \R^N$, where $z_i=\varphi(i)$.
%We use this representation for the remainder of the paper.

Each vector $z\in\R^N$ defines a sublevel-set filtration of $K$ as follows. For $r\in\R$, let $K_r(z)$ denote the subcomplex of $K$ 
whose vertices $i$ have $z_i\le r$ and whose edges $[i,j]$ satisfy 
$\max\{z_i,z_j\} \le r$. 
As $r$ varies, the homology $H_n$ of the $K_r(z)$ determines a persistence diagram $\PH_n(z)$.

We focus on $\PH_0$ in this paper, because we restrict our attention to complexes
that are either an $N$-gon or a tree. In these cases
$\PH_n(z)$ is empty for $n>1$, and $\PH_1(z)$ consists of
either the single point 
$(\max\{z_i\},\infty)$ or the empty set, respectively.
We write $M$ for the number of points in $PH_0(z)$.

A persistence diagram $P$ is considered {\it typical} if 
the coordinates of its $M$ points are distinct, and
we say that $z\in\R^N$ is a {\it typical point} if 
its coordinates are distinct.
Clearly, typical points have a typical persistence diagram.
We leave the proof of the following lemma to the reader.

\begin{lemma}\label{lem:tpz'}
Given a complex $K$ of the type discussed in this paper and an associated  
typical persistence diagram $PH_0(z)=\{ (p_m^b,p_m^d)\}_{m=1}^M$, 
then there is a typical point $z'$ with $PH_0(z)=PH_0(z')$.
\end{lemma}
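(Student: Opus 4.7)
The plan is to keep fixed the values of a small set of ``critical'' vertices (one for each coordinate of $P$) and to perturb the remaining ``non-critical'' vertex values slightly so as to break all ties without changing the diagram.

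First, because $P$ is typical, its $2M - 1$ finite coordinates are pairwise distinct, and the persistence algorithm applied to the sublevel filtration of $z$ associates to each such coordinate $c$ a unique vertex $i_c$ with $z_{i_c} = c$ responsible for the birth or death event producing $c$. Distinctness of the $i_c$ follows because, by typicality, two distinct events of the filtration cannot share a value. Let $V_c = \{i_c\}$ be the critical vertices and $V_n$ its complement; by construction, the $2M-1$ values $\{z_{i_c}\}$ are distinct, so any tie in $z$ involves at least one vertex of $V_n$.

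Next, I perturb each $z_j$ with $j \in V_n$ to a nearby value $z'_j$, keeping $z'_i = z_i$ for $i \in V_c$, so that $z'$ has pairwise distinct coordinates. The perturbations are chosen small enough that no non-critical value crosses a critical one. When a non-critical $j$ is tied with a critical vertex $i_c$ at value $c$, the sign of the perturbation of $z_j$ must be chosen so that the combinatorial structure of the filtration at $r = c$ is preserved; a short case analysis on the role of $c$ (birth or death) and on which components of $K_{c^-}$ contain the smaller neighbors of $j$ and of $i_c$ shows that an admissible sign always exists. For example, if $c$ is a death value whose vertex $i_c$ merges two specific components of $K_{c^-}$, and a tied non-critical $j$ has its sole smaller neighbor in one of those two components, then either sign works; but if $j$ has a smaller neighbor in a third component of $K_{c^-}$, then perturbing $z_j$ upward would shift the death event from $c$ to $c+\epsilon$ and create a spurious extra death, whereas perturbing $z_j$ downward preserves the diagram.

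Once these perturbations are chosen, each critical vertex of $z'$ realizes the same birth or death at the same value as in $z$, the pairings between births and deaths are preserved, and non-critical vertices still contribute no events to $\PH_0$. Hence $\PH_0(z') = P$ and $z'$ is typical. The main obstacle, and presumably the reason the authors leave the proof to the reader, is the case analysis above; once one verifies that typicality of $P$ forces each tied chunk of the filtration into a very restricted combinatorial shape (at most one net birth or one net death, realized by a unique vertex), the existence of an admissible choice of perturbation signs becomes routine.
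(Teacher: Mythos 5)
The paper does not actually supply a proof of this lemma (it is explicitly left to the reader), so the only question is whether your perturbation argument is complete. Your overall strategy --- freeze one vertex per diagram coordinate and perturb the remaining coordinates by amounts too small to cross any critical value, choosing directions so as to preserve the filtration's combinatorics --- is the right idea, but as written it has a genuine gap: you only analyze ties between a non-critical vertex and a critical vertex, and you assert without justification that ``non-critical vertices still contribute no events.'' That assertion can fail under exactly the constraints you impose. Take a path $a,u,v,w$ inside $K$ with $z_a=0$, $z_u=z_v=2$, $z_w=5$, where $2$ is not a coordinate of $P$; then $u,v$ are non-critical and untied with any critical vertex, yet lowering $z_v$ to $2-\epsilon$ while leaving $z_u=2$ creates a new component $\{v\}$ at level $2-\epsilon$ which dies at level $2$, producing a spurious off-diagonal point $(2-\epsilon,2)$. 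No critical value is crossed, so your scheme permits this bad choice. The correct statement is that within each \emph{plateau} (maximal connected set of equal-valued vertices) the perturbed values must be made strictly monotone toward the plateau's exit (its vertex adjacent to a strictly lower neighbor), or, for a local-minimum plateau, toward the vertex kept at the original value; a per-vertex ``sign'' chosen only when tied with a critical vertex does not encode this.

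A second, related problem is your claim that each finite coordinate $c$ of $P$ has a \emph{unique} vertex $i_c$ responsible for the event. Typicality rules out two recorded events sharing the value $c$, but it does not rule out several equal-valued vertices jointly causing one event: a merge at value $c$ can be effected by a chain $a-v_1-v_2-b$ with $z_{v_1}=z_{v_2}=c$ and $z_a,z_b<c$, and a birth at $c$ can come from a local-minimum plateau of several vertices. You may of course designate one representative per event, but then the remaining tied vertices are ``non-critical'' and must be perturbed with the plateau-coherent ordering above (e.g.\ for the merge example, the vertex perturbed downward must be the one adjacent to the surviving lower neighbor, and deaths must still occur exactly at $c$, not at $c\pm\epsilon$). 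With these repairs --- treat plateaus rather than single tied vertices, keep the designated representative of each birth (resp.\ death) plateau at its exact value, and order the perturbed values monotonically toward the exit --- your argument does go through for the trees and $N$-gons considered here, but in its present form the key technical step is missing rather than merely routine.
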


\begin{definition}\label{def:cvs}
 If $z$ is a typical point,
we say that a coordinate $z_i$ is a {\it local maximum} 
(resp., a {\it local minimum}) if 
$z_i\ge z_j$ (resp., $z_i\le z_j$)
%$z_i>z_j$ (resp., $z_i<z_j$) 
for all vertices $[j]$ adjacent to $[i]$.
%$z_{i-1}<z_i>z_{i+1}$ (resp., if $z_{i-1}>z_i<z_{i+1}$).
%By Lemma \ref{lem:tpz'}, 
%There are $2M$ local extrema on the $N$-gon, and 
%$2M-1$ local extrema on a tree.
The vector $(z_{i_1},...,z_{i_{L}})$ in $\R^L$ of 
the local maxima and minima in $z$ is the 
{\it critical value sequence} of the typical point $z\in\R^N$.
%Note that if $K$ is the $N$-gon, then $z_{n_1}$ may be $z_0$.
\end{definition}

\paragraph{\bf Digraphs.}
Given $K$ and $z$, we define a %$z\in\R^V$ determines a
(vertex-weighted) directed graph with underlying graph $K$
as follows.
If $v$ and $w$ are adjacent vertices in $K$ and $z_v\ge z_w$,
there is an edge directed from $v$ to $w$,
Note that if $z_v=z_w$ then there are two directed edges
between $v$ and $w$ ($v\rightleftharpoons w$); we call this a
{\it two-sided  edge}.

For a typical $z$, the sources and sinks of the directed graph
are local maxima and minima of $z$.
In particular, a local minimum occurs at a source 
if and only if it has out-degree~0 and
a local maximum occurs at a source 
if and only if it has in-degree~0.
A vertex of out-degree 1 (and arbitrary in-degree)
plays no role in the persistence diagram; this applies in particular
to local maxima occurring at leaves of the graph.

For any typical $z$, there is a unique vertex 
$i_{\mathrm{min}}$ for which $z$ is a minimum; 
the corresponding point in the persistence diagram
$P_0(z)$ is $(z_{i_\mathrm{min}},\infty)$.  
All other persistence points are finite,
and each persistence point occurs exactly once.

\begin{lemma}
Suppose that $z$ is a typical point, and 
$\textrm{out-degree}(i)\ne1$. Then  there is 
a persistence point with coordinate $z_i$.
%$z_i$ is a local minima (resp. maxima),
%If $z$ is a typical point and $z_i$ is not a critical coordinate, then %there is no persistence point with coordinate $z_i$.
\end{lemma}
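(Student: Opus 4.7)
The plan is to analyse exactly which cells of $K$ first appear in the sublevel-set filtration at the value $r=z_i$, and to count the persistence events that are localised there. Because $z$ is typical, no edge $[i,j]$ of $K$ satisfies $z_i=z_j$, so the out-degree of $i$ equals $k:=\#\{\,j:[i,j]\in K,\ z_j<z_i\,\}$. A cell $\sigma$ of $K$ first enters $K_r(z)$ at $r=z_i$ iff $i\in\sigma$ and every other vertex of $\sigma$ has strictly smaller $z$-value; so at $r=z_i$ the filtration gains exactly the vertex $i$ together with the $k$ edges $[i,j]$ to its lower neighbors.

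If $k=0$, then $i$ is a local minimum and the only new cell is the isolated vertex $i$; this creates a new $H_0$ class and thus a point of $\PH_0(z)$ whose first coordinate is $z_i$, as required.

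Assume now $k\ge 2$, and let $c$ be the number of connected components of $K_r(z)$ (for $r$ slightly below $z_i$) that contain at least one of the $k$ lower neighbors of $i$; so $1\le c\le k$. Processing the new cells in order of increasing dimension, the vertex $i$ first spawns its own momentary component, which the first incident edge immediately absorbs into one of the $c$ preexisting components, yielding the trivial on-diagonal bar $[z_i,z_i]$. Each of the remaining $k-1$ edges either merges the growing component through $i$ with another of the $c$ distinct neighbor-components (a death in $\PH_0$ at time $z_i$, by the elder rule) or closes a new loop (a birth in $\PH_1$ at time $z_i$). Since the end result is that all $c$ components are merged into one, exactly $c-1$ of these edges are of the first type and $k-c$ of the second, producing
\[
(c-1)+(k-c)\;=\;k-1\;\ge\;1
\]
genuine persistence events with coordinate $z_i$.

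The step requiring the most care is isolating the trivial pairing of the vertex $i$ with its first incident edge from the genuine persistence events; this is exactly what explains the excluded case $k=1$, in which only the on-diagonal bar is produced and no point of the persistence diagram records the value $z_i$, in agreement with the preceding remark that out-degree $1$ vertices are invisible. For the complexes at hand, the $\PH_1$-contribution is either automatically zero (tree case, where $c=k$ because $K$ has no cycles) or contributes the unique $(\max_j z_j,\infty)$ of the $N$-gon, so no separate case analysis is needed at the level of this counting argument.
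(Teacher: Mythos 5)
Your proposal follows the same case split as the paper ($k=0$ versus $k\ge 2$) and reaches the same conclusion, but fills in a point the paper's terse proof glosses over. For $k\ge 2$ the paper simply asserts that ``at least 2 components of $K_r$ will merge at $r=z_i$'' and therefore a death occurs; this is imprecise exactly when $i$ is the global maximum of the $N$-gon, where (for $r$ slightly below $z_i$) the lower neighbors of $i$ already lie in a single component, so the only ``merge'' is the on-diagonal pairing of $\{i\}$ with that component, and the genuine event is a birth in $\PH_1$, not a death in $\PH_0$. By introducing the count $c$ of pre-existing lower-neighbor components and splitting the $k-1$ nontrivial events into $c-1$ $\PH_0$-deaths and $k-c$ $\PH_1$-births, your argument handles this degenerate case cleanly and shows that $k-1\ge1$ genuine events always occur. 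The only small thing worth making explicit (though it is implicit in your setup) is that each $\PH_0$-death at $z_i$ pairs with a birth strictly below $z_i$ — since all lower-neighbor components contain a vertex $j$ with $z_j<z_i$ and $z$ is typical — so these are off-diagonal points of the diagram, not trivial bars.
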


\begin{proof}
If $i$ has out-degree~0, then $z_i$ is a local minima,
and hence there is a persistence point $(p^b,p^d)$ with 
$p^b=z_i$.
If $i$ has out-degree~$\ge2$ then at least~2 components
of $K_r$ will merge at $r=z_i$, 
and thus there is a persistence point $(p^b,p^d)$ with 
$p^d=z_i$.
\end{proof}

\begin{subremark}
%We leave it to the reader to check that,
If $z_i$ is  not  a  typical  point then, after identifying
vertices connected by  2-sided edges,
a similar argument applies. We leave the verification to the reader.
\end{subremark}

We say that $z_i$ is a \emph{critical} coordinate if the number of out-edges (after the potential identification) from $i$ is not exactly one.

%%%%%%%%%%%%%%%%%%%%%%%%%%%%%%%%%%%%%%%%%%%%%%%%%%%%%%%%%%%%%
\section{Persistence diagrams on an $N$-gon}
\label{sec:components}

Throughout this section $K$ is an $N$-gon, the vertices 
are identified with elements of $\Z/N$ (with $0=N$) and there are $N$ edges, $[i,i+1]$, $0\le i<N$.
Our main result (Corollary \ref{cor:circle}) concerns  the preimage $\data_P$ in $\R^N$ of a typical persistence diagram: when $N/2$ is greater than the number $M$ of persistence points of $P$, the connected components of $\data_P$ are homotopy equivalent to $S^1$.

The following result is analogous  to \cite[Lemma 2.4]{CMW}.

\begin{lemma}\label{lem:extrema}
Let $z\in\R^N$ be a typical point with persistence diagram
$\PH_0(z)=\{ (p_m^b,p_m^d)\}_{m=1}^M$. Then $z$ has $2M$ local extrema;
each $p_m^d$ is a local maximum, and each $p_m^b$ is a local minimum.

Since there are only finitely many critical value
sequences with these local maxima and minima, 
only finitely many critical value sequences arise. 
\end{lemma}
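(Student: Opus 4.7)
The plan is to combine the preceding lemma with a direct count that exploits the fact that every vertex of the $N$-gon has degree $2$ and that $H_1(K)\cong\Z$. First I would observe that for a typical $z$ on the $N$-gon, each vertex $i$ has out-degree $0$, $1$, or $2$; the out-degree-$0$ vertices are exactly the local minima and the out-degree-$2$ vertices are exactly the local maxima. By the previous lemma, each such vertex contributes a persistence point with coordinate $z_i$. I would then identify the dimension of each contribution by tracking $K_r$ as $r$ crosses $z_i$: at a local minimum, $i$ enters as an isolated vertex and starts a new component, so $z_i=p_m^b$ for some $m$; at a local maximum both neighbors of $i$ already lie in $K_r$ for $r$ slightly below $z_i$, so either they sit in the same component (and adding $i$ closes a loop, producing a birth in $H_1$, which does not appear in $\PH_0(z)$) or in distinct components (and adding $i$ merges them, giving $z_i=p_m^d$ for some $m$).

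Next I would count. Let $L_{\min}$ and $L_{\max}$ denote the numbers of local minima and maxima. Since $K$ is connected, exactly one point of $\PH_0(z)$ has infinite death---the one coming from the global minimum---so there are $M$ births and $M-1$ finite deaths. The analysis above gives $L_{\min}=M$ and identifies the number of merge-causing local maxima as $M-1$. Because $H_1(K)\cong\Z$, exactly one local maximum closes the unique independent loop, so $L_{\max}=(M-1)+1=M$, yielding $L_{\min}+L_{\max}=2M$ local extrema with the births coinciding with the local minimum values and the deaths with the local maximum values, as claimed.

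For the finiteness assertion, once the multisets of local maximum values $\{p_m^d\}$ and local minimum values $\{p_m^b\}$ are fixed by the diagram, a critical value sequence is specified by choosing the $2M$ cyclic positions of the critical vertices among the $N$ vertices of $K$, together with an admissible bijection from the $2M$ prescribed values to these positions; here \emph{admissible} means that local maxima and minima alternate around the cycle, as forced by the fact that between two adjacent critical vertices the function is monotone along the intervening out-degree-$1$ arc. Both the choice of positions and the choice of bijection range over finite sets, so only finitely many critical value sequences arise.

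The main potential obstacle is justifying the dichotomy at a local maximum---that the two already-present neighbors force exactly one of ``merge'' or ``close a loop''---and confirming that the loop-closing case occurs for exactly one local maximum because $H_1(K)\cong\Z$. Beyond this, the argument is a bookkeeping exercise on top of the preceding lemma and the cyclic structure of the $N$-gon.
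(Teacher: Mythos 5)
The paper gives no proof of this lemma, only a pointer to \cite[Lemma~2.4]{CMW}, so there is no in-paper argument to compare against. Your direct verification is correct: you classify vertices of the $N$-gon by out-degree ($0$, $1$, or $2$), invoke the preceding lemma to match each critical vertex to a persistence coordinate, and balance the ledger with $M$ births, $M-1$ finite deaths, and one loop-closing event, which is forced by $H_1(K)\cong\Z$ together with typicality of $z$ (so the loop closes at a unique parameter, namely the global maximum).

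One point worth making explicit, because it is glossed over both in your summary sentence and in the lemma's own wording: the point $(z_{i_{\min}},\infty)$ of $\PH_0$ has no local maximum paired to its death. The $M$th local maximum in your tally is the global maximum, which generates $\PH_1$ rather than a finite $p_m^d$. The lemma's assertion that ``each $p_m^d$ is a local maximum'' therefore implicitly identifies that infinite death with the global maximum; this is consistent with Example~\ref{ex:N=2M+1}, where all $2M$ values $p_m^b,p_m^d$ are treated as extremal values of $z$. For the finiteness claim your argument reaches the right conclusion but is heavier than required and slightly misreads Definition~\ref{def:cvs}: a critical value sequence records only the extremal values $(z_{i_1},\dots,z_{i_{2M}})$, not the cyclic positions $i_1,\dots,i_{2M}$ in $\{1,\dots,N\}$, so once the $2M$ values are fixed by $P$ there are at most $(2M)!$ possible sequences and finiteness is immediate.
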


\begin{subremark}
Conversely, if $PH_0(z)=\{ (p_m^b,p_m^d)\}_{m=1}^M$ is a 
persistence diagram with distinct values, 
%\edit{is this right?\\ KM I can give a proof, but the details are technical or long winded.\\Should we move this to the previous section. This is a general result.}
there is a typical point $z'$ with $PH_0(z)=PH_0(z')$.
\end{subremark}

When $N=2M$, every $z_i$ is a local extremum, so $\data_P$ is a finite
set. We shall assume that $N>2M$ for the rest of the paper.

\begin{example}\label{ex:N=2M+1}
When $N=2M+1$, every point $(z_1,...,z_N)$ has exactly one 
non-extremal value,
every component of $\data_P$ is homeomorphic to a circle, and the 
components are indexed by the critical value sequences 
modulo cyclic rotations.

%\edit{rewritten 4/29}
To see this, fix a critical value sequence $v=(a,b,...,s,t)$ using the $2M$ values 
$p_m^b,p_m^d$ in Lemma~\ref{lem:extrema};
there are only $N$ places to insert a non-extremal element.
Let $C_i(v)$, $i=0,\ldots,N-1$ denote the subspace of points in $\data_P$ 
with critical value sequence $v$, where $z_i$ is the non-extremal value.
Then $C_i(v)$ is an open interval, whose closure $\bar{C}_i(v)$ meets 
$\bar{C}_{i-1}(v)$ and $\bar{C}_{i+1}(v)$ in an endpoint when $i\ne0,N-1$. 
Writing $Rv$ for the cyclic rotation $(t,a,b,...,s)$ of $v$,
the closure of $C_0(v)$ meets $\bar{C}_1(v)$ and $\bar{C}_{N-1}(Rv)$, 
as $z_0$ is between $z_1=a$ and $z_{N-1}=d$.
Similarly, the closure of $C_{N-1}(v)$ meets $\bar{C}_{N-1}(v)$
and $\bar{C}_0(R^{-1}v)$.

To count the number of components, note that
the cyclic order of the local extrema cannot be changed without 
changing the persistence diagram, all points in $C(z)$ must have 
the same critical value sequence as $z$, up to rotation.
\end{example}

%\edit{KM:  Should we cite \cite{LT} here? This explicit geometric representation is very much in the spirit of what they do.}

%The following lemma implicitly counts the number of connected components
%of the preimage $\data_P$ of a typical persistence diagram $P$.
%Recall that $K$ is the $N$-gon.
%\edit{rewritten 4/28}%shows that, for the $N$-gon,
%the fiber space has a finite.

%\begin{lemma}\label{lem:rotate}
%If $z$ and $z'$ are typical points in $\R^N$ with persistence diagram $P$,
%%then they lie in the same connected component of $\data_P$ if and
%only if they have the same critical value sequences
%modulo cyclic rotations.
%%
%Since the cyclic order of the local extrema cannot be changed without 
%changing the persistence diagram, all points in $C(z)$ must have 
%the same critical value sequence as $z$, up to rotation.
%The rest of the proof is left to the reader. 
%\end{proof}

%We note that $C(z)$ is the closure of the set of typical points in $C(z)$.

\medskip%%%%%%%%%%%%%%%%%%%%%%%%%%%%%%%%%%%%%%%%%%%%%%%%%%%%%%%%
\section{The poset $\Str$ of cellular circular strings}
\label{sec:Str}

Throughout this section $K$ is an $N$-gon.
A {\it circular symbol string} is a string of symbols
$s=s_1\cdots s_N$, where each symbol is either $0,1$ or $X$;
we will refer to 0 and 1 as {\it bits}. (Cf.\,\cite{CMW}.)
%\edit{def. 'symbol string'}
Any associated symbol string has a canonical representation as the concatenation $s=\gamma_1\cdots\gamma_J$ of {\it blocks} $\gamma_i$, each block consisting of the same symbol, such that adjacent blocks have
different symbols. 
Because of our wrap-around convention, it is possible that the last block has the same symbol as the first.

\begin{definition}\label{def:cc-string}
A circular symbol string $s=s_1\cdots s_N$ is a {\it circular cellular string} of {\it rank} $M>0$
if for the canonical representation $s=\gamma_1\cdots\gamma_J$:
\begin{enumerate}
    \item[(i)] exactly $M$ blocks have symbol 0, and %(So ($M>0$.)] 
    \item[(ii)] if $\gamma_j$ consists of the symbol $X$ and $j\ne1,J$, then
the symbols of $\gamma_{j-1}$ and $\gamma_{j+1}$ are different.
\end{enumerate}
The \emph{dimension} of a cellular string $s$, $\dim(s)$, is the
 number of symbols $X$ in $s$; it is at most $N-2M$.
%We say that $s$ has {\it rank} $M$ if $M$ of the blocks have symbol $0$.
\end{definition}

Fix $N$ and $M<N/2$. The set $\Str=\Str(N,M)$ 
of circular cellular strings of length $N$ and rank $M$
is a poset, where $s'<s$ if the string $s$ is obtained from $s'$ by
replacing some of the bits 0 and 1 in $s'$ by $X$.
For example, in $\Str(3,1)$ we have $X01 > 001 < 0X1 > 011 < 01X$.

\begin{proposition}
The maximal elements of $\Str(N,M)$ are the strings of dimension $N-2M$.
\end{proposition}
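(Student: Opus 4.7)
The claim is an if-and-only-if: $s \in \Str(N,M)$ is maximal precisely when $\dim(s) = N - 2M$. The plan has two parts: establish the dimension upper bound $\dim(s) \le N - 2M$, and show that any $s$ with $\dim(s) < N - 2M$ admits a strict successor $s' > s$ in $\Str(N,M)$.

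For the upper bound, I would analyze the cyclic block decomposition of $s$. Condition (ii) forces every $X$-block to lie between two blocks of different symbols, i.e., between a $0$-block and a $1$-block. Deleting all $X$-blocks from the cyclic sequence of blocks therefore leaves an alternating cyclic sequence of $0$- and $1$-blocks, which must have equal numbers of each, namely $M$. Each of the $2M$ non-$X$ blocks contributes at least one symbol to $s$, so $s$ contains at least $2M$ bits, giving $\dim(s) \le N - 2M$. Consequently any $s$ with $\dim(s) = N - 2M$ is maximal, since nothing in $\Str(N,M)$ has larger dimension.

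For the converse, suppose $\dim(s) < N - 2M$. Then $s$ has strictly more than $2M$ bits, and since it has exactly $2M$ non-$X$ blocks, at least one such block $B$ has size $\ge 2$. Form $s'$ by replacing an endpoint symbol of $B$ by $X$. Then $\dim(s') = \dim(s) + 1$, and the number of $0$-blocks is unchanged: the shrunk $B$ is still a block of the same symbol (size $\ge 1$), and no $0$-block other than possibly $B$ is altered. To verify condition (ii) for $s'$, consider the cyclic neighbor $A$ of $B$ on the upgraded side. If $A$ is a bit-block, then $A$ differs from $B$ in symbol (adjacent canonical blocks differ), and the newly inserted singleton $X$-block sits between $A$ and the shrunk $B$, which have different symbols. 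If $A$ is an $X$-block, the new $X$ merges with $A$ to form an extended $X$-block; its cyclic neighbors are the former far-side neighbor of $A$ (a bit-block differing from $B$'s symbol by (ii) applied to $A$ in $s$) and the shrunk $B$, again of different symbols. Hence $s' \in \Str(N,M)$ with $s' > s$, proving $s$ is not maximal.

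The main obstacle is the bookkeeping in the converse direction: one must track whether the newly inserted $X$ merges with an existing $X$-block and, in either case, verify condition (ii) using the alternating structure already present in $s$. Once this routine case analysis is done, both directions of the proposition follow.
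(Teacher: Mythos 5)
Your proposal is correct and takes essentially the same route as the paper: the paper's two-line proof is exactly your converse step---locate a bit-block of length $\ge 2$ and replace an end symbol by $X$---with the upper bound $\dim(s)\le N-2M$ taken as read from Definition~\ref{def:cc-string} and the check that the modified string stays in $\Str(N,M)$ left implicit (deferred to \cite{CMW}). Your cyclic alternation count and the membership bookkeeping just spell out what the paper omits, under the cyclic reading of condition (ii), which is the intended one (otherwise the paper's asserted bound on $\dim(s)$ would itself fail).
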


\begin{proof} (Cf.\,\cite[Prop.\,2.8]{CMW}.)
If $s$ has smaller dimension then there is a block of length $\ge2$
of symbols $0$ or $1$. Replacing the first symbol in the block by $X$
yields a symbol $s'$ with $s<s'$, so $s$ is not maximal.
\end{proof}

\begin{lemma}\label{lem:glb}
Every string $s'$ in $\Str$ is the greatest lower bound of the set of maximal strings $s$ with $s' < s$.

Every maximal chain in $\Str$ has length $N-2M$.
\end{lemma}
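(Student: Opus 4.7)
The plan is to prove both parts using a single \emph{covering move}: given any non-maximal $s' \in \Str$, the preceding proposition supplies a bit-block $B$ of $s'$ of length $\ge 2$, and replacing the leftmost bit of $B$ by $X$ produces a string $s'' \in \Str$ satisfying $s' < s''$ with $\dim(s'') = \dim(s') + 1$. Condition (ii) is preserved because the new $X$ sits between the left boundary symbol of $B$ (a different bit, or the left-boundary bit of an adjacent $X$-block of $s'$, which differs from the symbol of $B$ by condition (ii) applied to $s'$) and the remaining bit of the shortened block; the rank is preserved because the shortened block still has length at least~$1$. Applied in reverse (replacing an endpoint of an $X$-block of $s$ by the adjacent boundary bit), the same move shows that minimal elements of $\Str$ are precisely the strings of dimension~$0$.

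For Part 1, given a bit-position $i$ of $s'$, I would iterate the covering move, always choosing a bit-block of length $\ge 2$ together with an endpoint distinct from $i$ (possible since at most one endpoint of any single block equals $i$). The iteration terminates at a maximal $s \ge s'$ with $s_i = s'_i$. Now any $t \in \Str$ with $t > s'$ has an $X$ at some bit-position $i$ of $s'$, and the maximal extension $s$ constructed for that $i$ still carries the bit at $i$, so $t \not\le s$. Hence no $t > s'$ is a lower bound of the set of maximal extensions of $s'$, proving that $s'$ is the greatest lower bound.

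For Part 2, I would show every cover relation $s' \lessdot s$ in $\Str$ satisfies $\dim(s) = \dim(s') + 1$; combined with the fact that minimal elements have dimension~$0$ and maximal elements have dimension $N - 2M$, this forces every maximal chain to have length $N - 2M$. Suppose $s' < s$ with $\dim(s) - \dim(s') \ge 2$, and set $I := J \setminus J'$, the set of new $X$-positions. I would locate a bit-block $D$ of $s'$ of length $\ge 2$ meeting $I$, and apply the covering move at an endpoint of $D$ lying in $I$; the resulting $s''$ satisfies $s' < s'' \le s$ with $\dim(s'') = \dim(s') + 1$, contradicting that $s' \lessdot s$ is a cover.

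The hard part will be proving such a $D$ always exists. Suppose instead that every bit-block of $s'$ meeting $I$ is a singleton. For each $X$-block $B_j = [p,q]$ of $s$ that contains a new $X$, the bits of $s'$ inside $B_j$ then form isolated singletons separated by $X$-blocks of $s'$; applying condition (ii) to each of these $X$-blocks forces the singleton bits to alternate in value, beginning opposite to the left boundary bit $a$ of $B_j$ in $s$ and ending opposite to the right boundary bit $b$. Since $a \ne b$, the count of singletons in $B_j$ is even and at least $2$, and exactly half of them are $0$-bits forming singleton $0$-blocks of $s'$ that vanish in $s$. Summing over all such $B_j$ yields $M_s < M_{s'}$, contradicting $s \in \Str(N, M)$ and completing the case analysis.
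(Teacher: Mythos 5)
Your covering-move strategy is a genuinely different route from the paper's (which does downward induction on dimension, replacing the \emph{two} end symbols of a single block by $X$), but as written it has real gaps. In Part 1, your final step only shows that no $t>s'$ is a lower bound of the maximal extensions, i.e.\ that $s'$ is a \emph{maximal} lower bound; in a general poset this is weaker than being the greatest lower bound. Your own construction does give the full statement — any common lower bound $t$ of the bit-preserving maximal extensions must agree with $s'$ at every bit position, hence $t\le s'$ — but the inference you actually wrote is not valid. In Part 2, case (a), you use a length-$\ge2$ bit-block $D$ meeting $I$ \emph{at an endpoint}; that any block meeting $I$ must do so at an endpoint is true but unproved in your text (if only interior positions of $D$ lay in $I$, the new $X$-run would be an interior block of $s$, not the first or last block, with both neighbors equal to the bit of $D$, contradicting Definition \ref{def:cc-string}(ii) for $s$).

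The serious error is case (b). Condition (ii) exempts the first and last blocks, and your alternation/counting argument ignores this exemption: it assumes every $X$-block of $s$ containing a new $X$ has boundary bits $a\ne b$ on both sides, and that the $X$-blocks of $s'$ between the singletons are all constrained by (ii). Take $N=8$, $M=2$, $s'=10011001$ and $s=X001100X$: both lie in $\Str(8,2)$, $I=\{1,8\}$ consists of two singleton $1$-blocks of $s'$, and the rank does not drop, so the claimed contradiction $M_s<M_{s'}$ is false. (This does not contradict the lemma — $X0011001$ is a valid intermediate — but it invalidates your proof of case (b).) The correct way to finish that case is not by contradiction but by direct construction: two distinct new singleton bits can never lie in a common $X$-run of $s$, since the old $X$'s between them would form an interior $X$-block of $s'$ with equal-symbol neighbors; hence replacing any single one of them by $X$ already produces a string strictly between $s'$ and $s$. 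The paper sidesteps all of these boundary issues with its short downward induction.
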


\begin{proof}
The proof of  \cite[Lemma 2.9]{CMW} goes through. Briefly,
we proceed by downward induction on the dimension $d=\dim(s')$. Replacing 
the two end symbols of a block by $X$ yields two $(d+1)$-dimensional
strings whose greatest lower bound is $s'$.
\end{proof}

\bigskip
%\section{The polytopes}\label{sec:polytopes}
\section{The polytopes for the $N$-gon}\label{sec:polytopes}

Throughout this section we work the $N$-gon.
Fix a critical value sequence $(z_{n_1},...,z_{n_{2M}})$ 
of a typical point $z$.
%where $K=2M$.
%\edit{recall $K=2M$?}
To each circular cellular string $s$, represented in block form as $\gamma_1\cdots\gamma_J$, we associate a polytope $T(s)=\prod T(\gamma_j)$ as in \cite{CMW}:
\begin{itemize}
    \item if $\gamma_j$ is the
$k^{th}$ block involving 0 or 1 we set $T(\gamma_j)=\{z_k\}^{n_k}$;
    \item if $\gamma_j$ involves $X$ and $\gamma_{j-1}$ is the $k^{th}$ block
involving 0 or 1, we define $T(\gamma_j)$ to be the simplex
of all monotone sequences $(x_1,...,x_{n_j})$ of length $n_j$
between $z_k$ and $z_{k+1}$.
\end{itemize}

Fix a persistence diagram $P$, and a component $C$ of $\data_P$.
Then for any typical point $z$ in $C$, it is clear from the definition
\ref{def:cvs} of a critical value sequence that $C$ is the union
of the simplices $T(s)$, where $s\in\Str(N,M)$
For this, it is convenient to work with the poset of
circular cellular strings.

Let $s$ be a circular cellular string with $k$ blocks with symbols $X$,
of lengths $n_1,...,n_k$.
Recall from \cite[Example 2.10, Theorem 2.13]{CMW} that the geometric realization of
the sub-poset $\Str/s = \{s': s'\le s\}$ is homeomorphic to the
product $\Delta^{n_1}\times\cdots \Delta^{n_k}$ of simplices,
i.e., to $T(s)$.

\begin{theorem}\label{thm A}
If $z$ is a typical point, the connected component $C(z)$ 
of $\data_P$ is homeomorphic to the geometric realization of $\Str$.
\end{theorem}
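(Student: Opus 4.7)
The plan is to identify $C(z)$ with $|\Str|$ by exhibiting both sides as polyhedral complexes with the same face poset. First, I would associate to each point $w \in C(z)$ a cellular string $s(w)$ by labelling each vertex $i \in \Z/N$ according to its role in the digraph associated to $w$: the label is $0$ if $w_i$ is a local minimum, $1$ if a local maximum, and $X$ otherwise (with the identification along 2-sided edges discussed in Section \ref{sec:prel}). By Lemma \ref{lem:extrema} this string has exactly $M$ blocks of $0$'s and $M$ blocks of $1$'s, and conditions (i) and (ii) of Definition \ref{def:cc-string} are forced; moreover the $2M$ extremal values $z_{n_1},\dots,z_{n_{2M}}$ are pinned down by $P$, independent of the choice of $w \in C(z)$.

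Second, I would verify that for each $s \in \Str$, the polytope $T(s)$ is exactly the set of $w \in C(z)$ with $s(w) \leq s$, and that $T(s') \subseteq T(s)$ iff $s' \leq s$ in $\Str$. The key observation is that replacing a bit in $s'$ by $X$ (so that $s' < s$) corresponds to letting an extremum value drift into a monotone interval between two fixed extrema, which is precisely a face inclusion on the corresponding simplex factor of $T(s)$. In particular $T(s_1) \cap T(s_2) = T(s_1 \wedge s_2)$ when the meet exists in $\Str$, and is empty otherwise.

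Third, I would invoke \cite[Example 2.10, Theorem 2.13]{CMW}, cited just before the theorem, to obtain canonical homeomorphisms $|\Str/s| \cong T(s)$ that are compatible with face inclusions. By Lemma \ref{lem:glb}, every element of $\Str$ is the greatest lower bound of the maximal strings above it, so both $|\Str|$ and $C(z)$ decompose as unions of their top-dimensional cells indexed by maximal strings, glued along matching face identifications. The homeomorphisms $|\Str/s| \cong T(s)$ then patch together to yield $|\Str| \cong C(z)$.

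The hardest step should be the second one, specifically the assertion that $C(z) = \bigcup_{s \in \Str} T(s)$. The inclusion $\supseteq$ is a direct check that each $T(s)$ lies in the correct persistence class. The reverse inclusion requires showing that any path in $\data_P$ between two typical points of $C(z)$ factors through pieces $T(s)$ whose strings $s$ are related by poset moves: while $P$ alone does not distinguish cyclic rotations of the critical value sequence, the constraint of remaining in a single \emph{connected} component $C(z)$ rules out any such rotation, and all admissible local deformations correspond to face moves in $\Str$. Once this matching is established, the identifications on the two sides agree and the theorem follows.
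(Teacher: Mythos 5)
Your overall strategy is the same as the paper's: write $C(z)$ as a union of the polytopes $T(s)$, identify each $T(s)$ with the realization of $\Str/s$ via \cite[Example 2.10, Theorem 2.13]{CMW}, and glue the pieces using the fact that intersections are governed by greatest lower bounds (Lemma \ref{lem:glb}). The gap is in the argument you offer for the step you yourself flag as the crux, namely $C(z)\subseteq\bigcup_{s\in\Str}T(s)$. You claim that remaining in a single connected component ``rules out'' cyclic rotations of the critical value sequence. This is false, and false in an essential way: within one component the critical value sequence is constant only \emph{up to} cyclic rotation, and rotations genuinely occur. Example \ref{ex:N=2M+1} makes this explicit: the closure of $C_0(v)$ meets $\bar{C}_{N-1}(Rv)$, i.e.\ as an extremum slides past vertex $N$ the sequence read from vertex $1$ is replaced by its rotation $Rv$ while the point never leaves the component. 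Indeed, if your no-rotation claim were correct, every point of $C(z)$ would carry one fixed linear critical value sequence and the interval argument of \cite{CMW} would force $C(z)$ to be contractible, contradicting Theorem \ref{thm B} and Corollary \ref{cor:circle}; the rotation is precisely the mechanism that produces the circle you are trying to exhibit.

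The same confusion affects two subsidiary assertions in your outline: the extremal values are pinned down by $P$ only as a cyclically ordered multiset, not as a sequence indexed from vertex $1$ independently of $w\in C(z)$; and for $M\ge2$ the set $\{w\in C(z): s(w)\le s\}$ is strictly larger than the single polytope $T(s)$, because the same pattern $s$ of extremal positions is realized inside $C(z)$ by several rotations of the assignment of critical values to the $0$- and $1$-blocks. What actually makes the covering statement work --- and what the paper's terse proof leans on --- is that the strings are \emph{circular}: the wrap-around convention of Section \ref{sec:Str} is exactly what absorbs the rotation of the critical value sequence, so that every point of $C(z)$ lies in a polytope indexed by a circular cellular string, after which the intersection bookkeeping is handled by Lemma \ref{lem:glb} as in \cite{CMW}. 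Once you drop the no-rotation claim and let the circular structure of $\Str$ do this work, the remainder of your argument coincides with the paper's proof.
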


\begin{proof}
The proof in \cite[Theorem 2.13]{CMW} %\it loc.\,cit.}\ 
goes through. The key observation
is that for each $s_1,...,s_n$, the intersection of the realizations 
of the $\Str/s_i$ is the realization
of $\Str/s'$, where $s'$ is the greatest lower bound of the $s_i$.
\end{proof}

\begin{example}
In \cite{LT}, a similar problem is studied with a different filtration,
intermediate between the polygon with $N$ vertices and its subdivision,
which has $2N$ vertices. The comparison is sketched in
Section 5.2 of \cite{LT}.
\end{example}

%If $N=3$, then $M=1$ and there are 6 strings with a symbol $X$,
%in two cyclic orbits (see Lemma \ref{lem:rotate}).
%This example is studied in detail in \cite{LT}.

\goodbreak 
%\newpage
\section{Homotopy operations}\label{sec:homotopy}

Let $\Str_0$ (resp., $\Str_1$) denote the sub-poset of 
strings in $\Str$ whose initial bit is 0 (resp., 1), such that
$s_1$ and $s_N$ are not both 0 (resp., 1). For example,
$0X1X$, $X0101$ and $XXX01$ are in $\Str_0$.

\begin{proposition}
The classifying spaces of $\Str_0$ and $\Str_1$ are contractible.
\end{proposition}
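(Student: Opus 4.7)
The plan is to prove $\Str_0$ contractible via a two-step monotone deformation retraction, followed by a direct argument on the reduced poset; the case $\Str_1$ follows by the symmetry $0\leftrightarrow 1$.

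First I would define $r\colon\Str_0\to\Str_0$ as the identity when $s_N=1$, and otherwise as the operation replacing the trailing $X$-block with $1$'s. Under the cyclic-alternation property of $\Str$ (bit-blocks alternating as $0,1,0,1,\ldots$ with $M$ of each type), the block $\gamma_{J-1}$ preceding any trailing $X$-block in a string of $\Str_0$ is the $2M$-th bit-block in the linear order, hence a $1$-block; so $r(s)$ merely lengthens this last $1$-block, preserves the rank, and stays in $\Str_0$. The properties $r(s)\le s$ and $r\circ r=r$ are immediate. For monotonicity: if $s\le t$, then (a) bit-blocks of $s$ cannot split in $t$, since splitting a $0$-block would create an interior $X$-block flanked by two $0$-blocks, violating condition (ii); (b) bit-blocks cannot be eliminated, since this would alter the rank. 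So each bit-block of $s$ only shortens to a sub-interval in $t$, which forces $t$'s last $1$-block to be a sub-interval of $s$'s last $1$-block, and the trailing $X$-block of $t$ to begin strictly after the start of $s$'s last $1$-block. Consequently the interval $[a_t, a_s-1]$ on which $r(s)$ and $r(t)$ can disagree lies inside $s$'s last $1$-block, where $s$ is all $1$'s; the inequality $r(s)\le r(t)$ follows. By the standard lemma on monotone kernel operators (Bj\"orner--Walker), $B\Str_0\simeq B\Str_0'$, where $\Str_0':=\{s\in\Str_0:s_N=1\}$.

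A symmetric argument, using that in $\Str_0'$ any leading $X$-block is followed by a $0$-block, produces a monotone kernel operator $q\colon\Str_0'\to\Str_0'$ that replaces the leading $X$-block with $0$'s and retracts $\Str_0'$ onto $\Str_0'':=\{s:s_1=0,\ s_N=1\}$.

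Finally I would show $\Str_0''$ is contractible. Its strings carry a $0$-block at position $1$ and a $1$-block at position $N$, effectively severing the cyclic wrap-around, so $\Str_0''$ is identified with a poset of linear cellular strings with pinned endpoints. For $M=1$ the poset has the unique maximum $0X^{N-2}1$, exhibiting $B\Str_0''$ as a cone. For $M\ge 2$ one invokes the contractibility of the analogous component of $\data_P$ in the interval case of \cite{CMW}, or constructs further iterative retractions by the simple moves of this section. This last step is the expected main obstacle: for $M\ge 2$ the poset $\Str_0''$ has no unique maximum, and naively ``filling position $2$ with a $0$'' fails monotonicity when one compares $s$ with $s_2=1$ to $t$ with $t_2=X$; a careful identification with the linear case, or a discrete Morse matching argument, will be required.
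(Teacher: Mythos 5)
Your two preliminary retractions (the kernel operators $r$ and $q$ that fill in the trailing $X$-block with $1$'s and the leading $X$-block with $0$'s) are correct ideas, and your monotonicity argument for $r$ — that a trailing $X$-block of $t$ can only bite into $s$'s last $1$-block, not past it, because swallowing a whole $1$-block (resp.\ $0$-block) would destroy the circular alternation (resp.\ the rank) — is sound, granted the circular-alternation reading of Definition~\ref{def:cc-string} which forces $s_N\in\{1,X\}$ for $s\in\Str_0$. So the reduction of $B\Str_0$ to $B\Str_0''$ with $\Str_0''=\{s:s_1=0,\ s_N=1\}$ is a legitimate alternative to what the paper does.

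However, as you yourself flag, the third step is still the whole content of the proposition, and you leave it as an acknowledged gap (``a careful identification with the linear case, or a discrete Morse matching argument, will be required''). The paper's actual proof dispenses with your steps~1 and~2 entirely: it observes that $\Str_0$ \emph{is already} the sub-poset of the linear poset $\Str$ in \cite{CMW} given by ``$s_1,s_N$ not both $0$,'' that the poset endomorphisms ($F_1$ and its relatives) used in \cite[Prop.~3.5, Cor.~3.6]{CMW} restrict to that sub-poset, and hence that the composed realizations already give a contraction of $B\Str_0$. In other words, the preliminary retractions buy you a tidier target poset but do not avoid the appeal to the \cite{CMW} machinery; the paper's route is shorter because it applies that machinery directly to $\Str_0$ without cutting it down first. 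To make your proposal a complete proof you would need to either carry out the identification of $\Str_0''$ with a component of the interval case and then quote \cite{CMW}, or explicitly transport $F_1$ from Definition~\ref{def:F1} to $\Str_0''$ and rerun the filtration argument of Proposition~\ref{p:00}; as written, you stop short of that.

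One smaller point to be careful about: your $r$ must be defined using the \emph{intended} (circular) reading of condition~(ii) in Definition~\ref{def:cc-string}. As literally written, the definition exempts $\gamma_1$ and $\gamma_J$ from the flanking condition, which would allow strings such as $X0110$ with $s_N=0$ in $\Str_0$, on which your $r$ is not defined. Your argument implicitly (and correctly) uses the circular alternation of bit-blocks, so $s_N\in\{1,X\}$ and $r$ is total; but this should be stated.
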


\begin{proof}
This is the content of Proposition 3.5 and Corollary 3.6 in \cite{CMW}.
The sub-poset of $\Str$ in {\it loc.\,cit.}\ consisting
of strings such that $s_1$ and $s_N$ are not both 0
is our $\Str_0$, and the
poset morphisms used in that proof send $\Str_0$ to itself.
The realizations of those poset morphisms, when composed,
give a homotopy from $B\Str_0$ to a point. 
The proof for $\Str_1$ is the same.
\end{proof}

For symbols $a,b$ we write $\Str_{ab}$ for the sub-poset of
strings in $\Str$ whose initial and terminal symbols are 
$a$ and $b$, respectively; we abbreviate such a string as 
$a\sigma b$, where $\sigma$ is a string of length $N-2$.

%Let $\Str_{00}$ (resp., $\Str_{11}$) denote the sub-poset of 
%strings in $\Str$ whose initial and terminal bits are 0 (resp., 1).

Let $\overline\Str_{00}$ 
denote the sub-poset of strings in $\Str$ whose initial and terminal 
symbols are either: both 0; 0 and $X$; or $X$ and 0.
Thus $\overline\Str_{00}$
contains $\Str_{00}$ as well as $\Str_{0X}$ and $\Str_{X0}$
and is disjoint from $\Str_0$.
%strings $0\cdots X$ and $X\cdots0$,
%and similarly for $\overline\Str_{11}$. 
Since the initial bit for $s\in\Str_{X0}$ is 1,
\begin{equation}\label{eq:00}
\overline\Str_{00} \cap \Str_0 = \Str_{0X}, \quad
\overline\Str_{00} \cap \Str_1 = \Str_{X0}.
\end{equation}
%the intersection of
%$\overline\Str_{00}$ and $\Str_0$ is the sub-poset $\Str_{0X}$,
%and $\overline\Str_{00}\cap\Str_1$ is the sub-poset $\Str_{X0}$.
We define $\overline\Str_{11}$ similarly, by interchanging 0 and 1.
Thus: 
\begin{equation}\label{eq:11}
\overline\Str_{11} \cap \Str_1 = \Str_{1X}, \quad
\overline\Str_{11} \cap \Str_0 = \Str_{X1}.
\end{equation}

\begin{lemma}\label{retract:00}
$B\Str_{00}$ is a deformation retract of $B\overline\Str_{00}$.
\\
By symmetry, $B\Str_{11}$ is a deformation retract of $B\overline\Str_{11}$.
\end{lemma}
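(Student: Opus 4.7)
My plan is to construct a poset retraction $r\colon \overline\Str_{00}\to\Str_{00}$ that restricts to the identity on $\Str_{00}$, together with an auxiliary poset morphism $\phi^+\colon\overline\Str_{00}\to\overline\Str_{00}$ satisfying $\mathrm{id}\le\phi^+$ and $i\circ r\le\phi^+$ pointwise. Since pointwise comparable poset morphisms realize to homotopic continuous maps on classifying spaces, the resulting zigzag $\mathrm{id}\le\phi^+\ge i\circ r$ yields the desired deformation retraction $B\Str_{00}\hookrightarrow B\overline\Str_{00}$.

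The key structural input, already recorded in the prose just above the lemma, is that for $s\in\Str_{X0}$ the block $\gamma_2$ immediately following the leading $X$-block is a $1$-block, and symmetrically for $s\in\Str_{0X}$ the block $\gamma_{J-1}$ immediately preceding the trailing $X$-block is a $1$-block. This tells us that each boundary symbol equal to $X$ is separated from the nearest $0$ only by $X$'s and $1$'s, and dictates the correct shape of the moves.

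Concretely, for $s\in\overline\Str_{00}$ let $t_L$ and $t_R$ be the first and last positions, respectively, at which $s$ takes the value $0$. Define $\phi^+(s)$ by replacing every $1$ at positions in $[1,t_L-1]\cup[t_R+1,N]$ by $X$; this is a bit-to-$X$ move, so $s\le\phi^+(s)$ in $\overline\Str_{00}$. Define $r(s)$ by further replacing the leading $X^{t_L-1}$ and the trailing $X^{N-t_R}$ of $\phi^+(s)$ by $0$'s, which then merge with the $0$-blocks starting at $t_L$ and ending at $t_R$. Rank is preserved because no $0$-block of $s$ is created or destroyed, and $r(s)\in\Str_{00}$ since it begins and ends with a $0$. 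On $\Str_{00}$ both $\phi^+$ and $r$ act as the identity, because there $t_L=1$ and $t_R=N$.

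The main technical obstacle is verifying that $\phi^+$ and $r$ are genuine order-preserving morphisms on all of $\overline\Str_{00}$. The delicate case is a bit-to-$X$ move from $s'$ to $s$ that changes $s'_{t_L(s')}$ itself, since this shifts $t_L$; rank-preservation forces the relevant $0$-block to have length at least $2$, so $t_L(s)=t_L(s')+1$, and a short case check at a single position shows $r(s')\le r(s)$ and $\phi^+(s')\le\phi^+(s)$. A symmetric analysis at the right end handles shifts of $t_R$, and condition~(ii) of Definition~\ref{def:cc-string} is preserved at each step because the only $X$-blocks ever created or extended are at the boundary, so they remain exempt from (ii). Since the two boundary modifications operate on disjoint index ranges, they can be applied simultaneously without interference.
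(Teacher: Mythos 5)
Your overall scheme (a poset retraction plus a zigzag of natural transformations fixing the subposet) is fine in principle, but the specific maps $\phi^+$ and $r$ do not exist as endomorphisms of the poset, so the argument breaks at the well-definedness step rather than at the monotonicity step you flag as "the main technical obstacle." For any $s\in\Str_{0X}$, the segment strictly after the last $0$ contains exactly one entire $1$-block: circularly, between the $0$-block containing $t_R$ and the initial $0$-block (reached through the wrap-around $X$'s) the bits must alternate, so there is precisely one $1$-block there, and it lies in $[t_R+1,N]$. Your $\phi^+$ erases it wholesale. The result has more than $N-2M$ symbols $X$ and two circularly consecutive $0$-blocks separated only by $X$'s, hence is not a circular cellular string of rank $M$ (it corresponds to no critical value sequence), so $\phi^+$ does not map $\overline\Str_{00}$ into itself; the same happens for $\Str_{X0}$ at the left end. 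Concretely, with $N=4$, $M=1$ and $s=01XX\in\Str_{0X}$ you get $\phi^+(s)=0XXX$ and $r(s)=0000$, neither of which lies in $\Str(4,1)$. Your rank claim ("no $0$-block of $s$ is created or destroyed") also fails in the sense that matters: after the intervening $1$-block is deleted, the $0$-fill merges two distinct circular $0$-blocks across the wrap-around, so the number of $0$-blocks drops. Finally, even ignoring validity, $r$ is not order-preserving: $01X0\le 01XX$ in $\Str$, but $r(01X0)=01X0$ and $r(01XX)=0000$ are incomparable.

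The repair is to do much less: change only the single boundary symbol $X$ into a $0$ and touch nothing else, i.e.\ $R(0\sigma X)=0\sigma0$ and $R(X\sigma0)=0\sigma0$, the identity on $\Str_{00}$. This is exactly the paper's proof. The new terminal (or initial) $0$ simply extends the wrap-around $0$-block, so $R(s)$ is a cellular string of the same rank lying in $\Str_{00}$; the structural fact you quoted (the bit adjacent to the boundary $X$-block across the string is a $1$) guarantees condition~(ii) survives. Since $R(s)$ is obtained from $s$ by replacing an $X$ with a bit, $R(s)\le s$, so the single natural transformation $R\Rightarrow\mathrm{id}$ realizes to a homotopy, constant on $B\Str_{00}$, between $BR$ and the identity of $B\overline\Str_{00}$; no auxiliary map $\phi^+$ and no zigzag are needed.
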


\begin{proof}
Define $R:\overline\Str_{00}\to\Str_{00}$ to be the identity on
$\Str_{00}$, and $R(0\sigma X)=0\sigma0$, $R(X\sigma0)=0\sigma0$.
It is easy to see that $R$ is a poset map, and that $R(s)\le s$,
i.e., $R\Rightarrow\mathrm{id}$ is a natural transformation.
Taking the geometric realization, we see that $R$ is a continuous map,
and that $R$ is homotopic to the identity on $B\overline\Str_{00}$.
%and the identity on $B\Str_{00}$.
\end{proof}

%For the proof, we need some notation.

\begin{definition}\label{def:F1}
If $s$ is a circular cellular string in $\Str_{00}$, 
we define $F_1(s)$ as follows (cf.\ \cite[Def.\,3.1]{CMW}).
If $s=0X\sigma0$, set $F_1(s)=s$. If not, there are two cases.
Case (i): if $s$ has no 00 or 11 preceding the 
leftmost $X$, $F_1(s)$ transposes that $X$ with the bit immediately preceding it.  
Case (ii): if $s$ has the form $\sigma_1abb\sigma_2$,
where $\sigma_1a$ is an alternating bitstring (beginning with 0)
and $\sigma_2$ is the remainder of the string,
we set $F_1(s)=\sigma_1aab\sigma_2$. Note that in case (ii),
$\sigma_2$ is either empty or ends in 0.

Let $\Str_{00}^{(\ell)}$ denote the sub-poset of $\Str_{00}$
consisting in strings beginning $0X\!\cdots\!X$ ($\ell\!-\!1$ symbols $X$).
The definition of $F_\ell:\Str_{00}^{(\ell)}\to\Str_{00}^{(\ell)}$ 
mimicks that of $F_1$; %Specifically, 
if $s=0\beta\sigma$ (where $\beta$ is a sequence of
$(\ell\!-\!1)$ symbols $X$) then $F_\ell(s)=0\beta F_1(\sigma)$.
\end{definition}

\begin{lemma}
$F_1:\Str_{00}\to\Str_{00}$ is a poset morphism and 
$F_1^k(\Str_{00})=\Str_{00}^{(2)}$ for $k\gg0$.
\end{lemma}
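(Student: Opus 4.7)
The plan is to handle the two assertions separately: that $F_1$ respects the order on $\Str_{00}$, and that iterating $F_1$ eventually lands in $\Str_{00}^{(2)}$.

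For the poset morphism claim, I would argue by a case analysis based on which of the three clauses of Definition~\ref{def:F1} applies to each of $s,s'$ in a comparison $s' \leq s$; recall that $s' \leq s$ means $s$ is obtained from $s'$ by replacing some bits $0,1$ with $X$'s. When both $s,s'$ fall in the identity clause $0X\sigma 0$, the conclusion is trivial. When both fall in case~(i), the leftmost $X$ of $s$ sits at a position no greater than that of $s'$ (since $s$ has at least as many $X$'s); when the positions coincide, the prefix before the $X$ is the same in both strings, so the transposition commutes with bit-to-$X$ substitutions made elsewhere. The argument for case~(ii) is analogous: the decomposition $\sigma_1 a b b \sigma_2$ for $s'$ is refined by additional $X$'s in $\sigma_1$ or $\sigma_2$, and the bit flip at position $|\sigma_1|+2$ commutes with such refinements. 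The mixed subcases, where the substitution $s' \rightsquigarrow s$ causes the governing clause to change (for example, an extra $X$ in $s$ shifts case~(ii) for $s'$ into case~(i) or into the identity clause for $s$), require the most careful checking.

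For the termination claim, I would introduce the potential
\[
\Phi(s) := \min\bigl\{\, p(s),\ q(s) \,\bigr\},
\]
where $p(s)$ is the position of the leftmost $X$ in $s$ (taking $p(s) = N{+}1$ if there is none), and $q(s)$ is the smallest $k \geq 2$ at which $s_{k-1} = s_k$ are a repeated bit (taking $q(s) = N{+}1$ otherwise). Directly from the definition: in case~(i), $F_1$ transposes the leftmost $X$ one step leftward, so $p(F_1(s)) = p(s) - 1$; in case~(ii), $F_1$ moves the first alternation break one step leftward, so $q(F_1(s)) = q(s) - 1$. Moreover, in case~(ii) the break lies before the leftmost $X$ (otherwise case~(i) would have applied), which forces $q(s) \leq p(s)$ and $\min\{p,q\} = q$; so $\Phi$ strictly decreases at every nontrivial iteration. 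Since $\Phi$ is integer-valued in $\{2,\ldots,N{+}1\}$, the iteration stabilizes after at most $N - 1$ steps at a fixed point, which by inspection of the three clauses must have the form $0X\sigma 0$, i.e., lie in $\Str_{00}^{(2)}$. The reverse inclusion is immediate, since $F_1$ fixes $\Str_{00}^{(2)}$ pointwise.

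The main obstacle is the mixed-case verification in the order-preservation argument: each configuration in which the governing clause of $F_1$ shifts between $s'$ and $s$ requires disentangling how the $X$-substitutions interact with both the transposition and the bit-flip, which I expect to be tedious but routine.
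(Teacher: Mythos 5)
Your proposal splits the lemma into its two assertions and handles them separately, which is a reasonable plan, but the two halves end up in very different shape.

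The termination half is a genuine contribution: the paper never spells out why $F_1^k(\Str_{00})=\Str_{00}^{(2)}$ for $k\gg0$, and your potential $\Phi(s)=\min\{p(s),q(s)\}$ gives a clean, verifiable argument. The bookkeeping checks out: in case~(i) the prefix before the leftmost $X$ is alternating, so $q(s)\ge p(s)+2$, hence $\Phi(s)=p(s)$, and the transposition gives $p(F_1(s))=p(s)-1$; in case~(ii) the first repeated pair sits strictly left of the leftmost $X$, so $\Phi(s)=q(s)$, and rewriting $\sigma_1 abb\sigma_2$ to $\sigma_1 aab\sigma_2$ gives $q(F_1(s))=q(s)-1$ while $p$ is unchanged. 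Since the only fixed points of $F_1$ are the strings $0X\sigma0$, i.e.\ exactly $\Str_{00}^{(2)}$, the iteration stabilizes in the stated sub-poset. This is more explicit than anything in the paper.

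The morphism half, by contrast, is a plan rather than a proof, and you acknowledge this (``requires the most careful checking,'' ``tedious but routine''). The paper takes a different and more economical route: a downward induction on the poset that reduces the general comparison $s'<s$ to cover relations. In case~(ii) the paper picks two specific covers $s_1=\sigma_1 aXb\cdots b\sigma_2$ and $s_2=\sigma_1 ab\cdots X\sigma_2$ of $s'$, checks directly that $F_1(s')<F_1(s_1)$ and $F_1(s')<F_1(s_2)$, and then appeals to the inductive hypothesis applied to whichever $s_i\le s$. This structure sidesteps most of the ``mixed-case'' bookkeeping that your case analysis would have to do by hand. Moreover, your worry about case~(i)-to-case~(i) comparisons where the leftmost-$X$ positions differ turns out to be vacuous: if $s'$ has an alternating bit prefix up to its leftmost $X$, the rank constraint (the number of $0$-blocks is fixed at $M$) and the circular-cellular flanking condition together forbid replacing any bit in that prefix by $X$ while staying in $\Str(N,M)$, so $p(s)=p(s')$ automatically. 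That observation is what makes the paper's ``the inequality is evident'' true, and without it your case~(i) argument, as written, does not actually close. So: the termination argument is complete and a nice addition; the order-preservation argument has a real gap at exactly the point you flagged, and the paper's downward induction (together with the rigidity of $\Str(N,M)$ under bit-to-$X$ substitutions) is what fills it.
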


\begin{proof}
We proceed by downward induction to show that if $s'<s$
then $F_1(s')\le F_1(s)$. If the initial $X$ in $s'$ is not preceded 
by a $00$ or $11$, the same is true for $s$, and the inequality is evident.
Next, suppose that $s'= \sigma_1 abb...b \sigma_2$,
where $\sigma_1a$ is an alternating bitstring.
If $s= \sigma_1 abb...b \sigma_2'$ with $\sigma_2 < \sigma_2'$,
we also have $F_1(s')<F_1(s)$. Otherwise,
either $s\ge s_1$ or $s\ge s_2$, where 
$s_1=\sigma_1 aXb\cdots b \sigma_2$ and $s_2=\sigma_1 ab\cdots X \sigma_2$.
Since $F_1(s')<F_1(s_1)$ and $F_1(s')< F_1(s_2)$, 
the result follows by induction.
\end{proof}

\goodbreak
\begin{proposition}\label{p:00}
$\Str_{00}$, $\overline\Str_{00}$, $\Str_{11}$ and 
$\overline\Str_{11}$ are contractible.
\end{proposition}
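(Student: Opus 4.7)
The plan is to reduce all four statements to the contractibility of $\Str_{00}$. For $\overline\Str_{00}$, Lemma~\ref{retract:00} already exhibits $B\Str_{00}$ as a deformation retract of $B\overline\Str_{00}$, so contractibility transfers. The cases $\Str_{11}$ and $\overline\Str_{11}$ follow by the same arguments after interchanging the symbols $0$ and $1$; Definition~\ref{def:cc-string}, the map $F_1$, and the retraction of Lemma~\ref{retract:00} are all symmetric under this swap.

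For $\Str_{00}$ itself, the plan is a telescope of deformation retractions driven by the poset morphisms $F_\ell$ of Definition~\ref{def:F1}. The preceding lemma shows that $F_1^k$ stabilizes on $\Str_{00}^{(2)}$ for $k\gg 0$, and the first clause of Definition~\ref{def:F1} makes $F_1$ the identity on $\Str_{00}^{(2)}$, so $F_1^k$ is a set-theoretic retraction $\Str_{00}\to\Str_{00}^{(2)}$. Granting the key homotopy $BF_1\simeq B\mathrm{id}$ on $B\Str_{00}$ (the main obstacle, discussed below), iteration gives $BF_1^k\simeq B\mathrm{id}$ and the retraction becomes a deformation retraction on classifying spaces. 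Replaying the same argument with $F_\ell$ on $\Str_{00}^{(\ell)}$ at each stage yields a chain
\[
B\Str_{00}\simeq B\Str_{00}^{(2)}\simeq B\Str_{00}^{(3)}\simeq\cdots\simeq B\Str_{00}^{(L)},
\]
terminating at the largest $L$ for which $\Str_{00}^{(L)}$ is nonempty. At that terminal stage all available $X$-symbols have been pushed to the left, leaving a poset with a unique maximum element whose classifying space is a cone and hence contractible.

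The main obstacle is establishing the key homotopy $BF_1\simeq B\mathrm{id}$. Because $F_1$ preserves $\dim(s)$, the strings $s$ and $F_1(s)$ are incomparable in $\Str_{00}$, so the one-step natural transformation trick used in Lemma~\ref{retract:00} does not apply directly. My plan is to construct, naturally in $s$, a zig-zag of poset relations connecting $s$ and $F_1(s)$ through auxiliary strings: in the favourable cases an upper bound $G(s)\ge s,F_1(s)$ obtained by $X$-ifying the modified bit position (in case~(i) of Definition~\ref{def:F1} this produces $\sigma_1 XX\sigma_2$; in case~(ii) it produces $\sigma_1 aXb\sigma_2$), and otherwise a common lower bound of smaller dimension obtained by choosing compatible bit values at the two modified positions. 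The natural transformations provided by such a zig-zag then yield $B\mathrm{id}\simeq BG\simeq BF_1$ after taking classifying spaces.

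The delicate point inside this plan is verifying that the join $G(s)$ always lies in $\Str_{00}$; in particular, the new $X$-block must satisfy Definition~\ref{def:cc-string}(ii), i.e., be flanked by two bit-blocks of different symbols. In case~(i) the alternating pattern preceding the leftmost $X$, together with the binary alphabet, can force the two flanking bits of the naive $XX$-block to coincide, and one must then either enlarge the join by a further $X$-ification on the left (so that the $X$-block becomes flanked by the next symbol of the alternating pattern), or, when this is blocked by the left boundary of the string, fall back on a lower-bound zig-zag through a dimension-zero ancestor. Carrying out this case analysis in a manner that is functorial in $s$ is the technical core, and it parallels the corresponding step in the proof of \cite[Prop.~3.5]{CMW} for the interval, suitably adapted to the circular setting.
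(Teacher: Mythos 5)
Your high-level skeleton matches the paper's: reduce $\overline\Str_{00}$, $\Str_{11}$ and $\overline\Str_{11}$ to $\Str_{00}$ via Lemma~\ref{retract:00} and the $0\leftrightarrow1$ symmetry, then collapse $\Str_{00}$ through the sub-posets $\Str_{00}^{(\ell)}$ using the maps $F_\ell$, ending at a trivially contractible poset. The gap is in the step you yourself flag as the crux, the homotopy $BF_1\simeq B\mathrm{id}$. Your primary device, a join $G(s)\ge s,\,F_1(s)$, does not exist in the poset in case~(i) of Definition~\ref{def:F1} --- not merely in exceptional boundary situations, but always. Write $s=\pi b X\cdots$ with $\pi b$ the alternating prefix and $F_1(s)=\pi X b\cdots$. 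Any common upper bound must have $X$ at the position of $b$, since $F_1(s)$ does and the partial order only replaces bits by $X$ going up. But $b$ is a singleton block: if $b=0$, $X$-ifying it destroys a $0$-block, so the resulting string has rank $M-1$ and is not an element of $\Str=\Str(N,M)$ at all; if $b=1$, the enlarged $X$-block is flanked by $0$ on the left (alternation) and by $0$ on the right (cellularity of $s$ forces the bit after its $X$-block to be $\ne b$), violating Definition~\ref{def:cc-string}(ii). Your proposed repair --- a further $X$-ification to the left --- hits the same obstructions, because every bit-block in the alternating prefix is a singleton and any two consecutive singletons include a $0$-block; extending to the right instead either leaves the right flank equal to $0$ or forces the deletion of an entire $0$-block. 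So in the main case there is simply no join to zig-zag through, and the mechanism carrying your telescope collapses.

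The zig-zag that does work goes the other way, and it is the one the paper uses: take $h(s)$ to be the \emph{greatest lower bound} of $s$ and $F_1(s)$ (lower bounds are obtained by filling $X$'s back in with bits, and rank and cellularity are not threatened from below), giving natural transformations $s\ge h(s)\le F_1(s)$ and hence homotopies between $B\mathrm{id}$, $Bh$ and $BF_1$. The paper organizes this via a filtration of $\Str_{00}$ by the sub-posets $Fil_i$ of strings needing at most $i$ applications of $F_1$ to reach $\Str_{00}^{(2)}$, using $h$ to retract $Fil_i$ onto $Fil_{i-1}$, and then repeats the argument for each $F_\ell$. You mention lower bounds only as a fallback for strings ``blocked by the left boundary,'' and you explicitly defer the existence and functoriality analysis as ``the technical core''; since that deferred core is precisely where your upper-bound mechanism fails, the proposal as written does not close the argument.
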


\begin{proof}%[Proof of \ref{p:00}.]
We give the proof for $\Str_{00}$; it follows by
symmetry and Lemma \ref{retract:00} that 
$\overline\Str_{00}$, $\Str_{11}$ and $\overline\Str_{11}$ 
are also contractible.

We first show that $\Str_{00}^{(2)}\to\Str_{00}$ is a homotopy equivalence.
As in \cite[Proposition 3.5]{CMW}, we filter $\Str_{00}$ by sub-posets $Fil_i$,
where $Fil_0=\Str_{00}^{(2)}$ and $Fil_i$ is the full sub-poset 
on the strings $s$ with $F_1^i(s)\in\Str_{00}^{(2)}$. Since
$F_1$ maps $Fil_i$ to $Fil_{i-1}$, the geometric realization
of $F_1$ restricts to a continuous map from $BFil_i$ to $BFil_{i-1}$.

To see that $BFil_{i-1}\subseteq BFil_i$ is a homotopy equivalence,
we define a poset endomorphism $h$ on $Fil_i$ as follows.
If $s\in Fil_{i-1}$ then $h(s)=s$. Otherwise, define $h(s)$
to be the greatest lower bound of $s$ and $F_1(s)$. Thus
$Bh$ is a retract of $BFil_i$ onto $BFil_{i-1}$.
For $s\in Fil_i$, the inequalities $F_1(s)\le h(s)\ge s$
yield natural transformations from $h$ to $F_1$ and to the identity,
and hence homotopies between the identity map, $Bh$, and $BF_1$.
These homotopies show that $BFil_{i-1}\simeq BFil_i$.
Composing these homotopies gives a homotopy equivalence
between $B\Str_{00}$ and $BFil_0=B\Str_{00}^{(2)}$.

The same argument, {\it mutatis mutandis}, shows that each inclusion 
$\Str_{00}^{(\ell-1)}\to\Str_{00}^{(\ell)}$ is a homotopy equivalence.
Since $B\Str_{00}^{(\ell)}$ is the point $\{0X\cdots X10\cdots10\}$
when $\ell=N-2m+1$, 
$B\Str_{00}$ is homotopy equivalent to a point, as claimed.
\end{proof}

\begin{proposition}\label{p:0X}
$\Str_{0X}$ and $\Str_{X0}$ are contractible.
\\
By symmetry, $\Str_{1X}$ and $\Str_{X1}$ are also contractible.
\end{proposition}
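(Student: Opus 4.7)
The plan is to follow the proof of Proposition \ref{p:00} mutatis mutandis for $\Str_{0X}$, then reduce the other three posets by symmetry: reversal of strings interchanges $\Str_{0X}$ with $\Str_{X0}$ (and $\Str_{1X}$ with $\Str_{X1}$), while swapping the symbols $0$ and $1$ interchanges $\Str_{0X}$ with $\Str_{1X}$ (and $\Str_{X0}$ with $\Str_{X1}$); both are order-preserving involutions, so it suffices to show $B\Str_{0X}$ is contractible.

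First I would dispose of the boundary case $N = 2M + 1$. Here the dimension bound $\dim(s) \le N - 2M = 1$ forces the terminal $X$ to be the only $X$ in $s$, and the requirement of $M$ blocks of $0$ and $M$ blocks of $1$ in the length-$2M$ bit prefix forces each block to be a single letter; so $\Str_{0X}$ is the single point $\{0 1 0 1 \cdots 0 1 X\}$, trivially contractible.

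For $N > 2M + 1$, the crucial verification is that the operator $F_1$ of Definition \ref{def:F1} restricts to a poset morphism $F_1 : \Str_{0X} \to \Str_{0X}$. Since $F_1$ modifies only the initial portion of $s$ (either at the leftmost $X$ in Case (i), or at the first $bb$ pattern in Case (ii)), the terminal letter is preserved except when Case (i) would transpose $s_N = X$ with $s_{N-1}$. This happens only if the leftmost $X$ of $s$ is $s_N$ itself and $s_1 \cdots s_{N-1}$ has no $00$ or $11$, i.e.\ is an alternating bitstring of length $N-1$; such a string has rank $\lceil (N-1)/2 \rceil$, which equals $M$ only for $N \in \{2M, 2M+1\}$, both ruled out. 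Hence $F_1$ is well-defined on $\Str_{0X}$, and the verification that it is a poset morphism is the same downward induction as in the lemma preceding Proposition \ref{p:00}.

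With $F_1$ in hand, the rest is a direct transcription of the proof of Proposition \ref{p:00}: the filtration $Fil_i = \{s : F_1^i(s) \in \Str_{0X}^{(2)}\}$ together with the retraction $h$ sending $s$ to the greatest lower bound of $s$ and $F_1(s)$ gives $B\Str_{0X} \simeq B\Str_{0X}^{(2)}$, and the analogous operators $F_\ell$ inductively yield $B\Str_{0X}^{(\ell-1)} \simeq B\Str_{0X}^{(\ell)}$ down to the contractible base case. I expect the main (and only) obstacle to be the combinatorial check excluding the problematic Case (i) situation under the rank assumption $N > 2M+1$; beyond that, all the homotopy machinery --- the natural transformations $F_1 \Rightarrow h \Leftarrow \mathrm{id}$, the filtration argument, and the induction on $\ell$ --- imports verbatim from Proposition \ref{p:00}.
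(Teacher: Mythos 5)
Your proof is correct and follows the paper's approach: reduce by symmetry to $\Str_{0X}$, transport the operator $F_1$ of Definition \ref{def:F1}, and rerun the filtration argument of Proposition \ref{p:00}. You make explicit two details the paper elides with its ``goes through word for word'' and its appeal to Example~\ref{ex:N=2M+1}: that $\Str_{0X}$ collapses to the single string $0101\cdots01X$ when $N=2M+1$, and that the rank count is precisely what prevents Case~(i) of $F_1$ from displacing the terminal $X$ once $N>2M+1$.
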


\goodbreak
\begin{proof}
Since the posets $\Str_{0X}$ and $\Str_{X0}$ are isomorphic 
(by the front-to-back permutation of strings), it suffices
to give the proof for $\Str_{0X}$. By Example \ref{ex:N=2M+1},
We may assume that $N>2M+1$.
%as $\Str_{0X}^{(N-2M)}$ is the point $\{010\cdots1X\}$.
Definition \ref{def:F1} goes through word for word
in this setting to yield a poset endomorphism $F_1$ on $\Str_{0X}$,
with the image of $F_1^K$ being $\Str_{0X}^{(2)}$ for $K\gg0$.
Now the proof of Proposition \ref{p:00} goes through to show
that $\Str_{0X}$ is contractible.
\end{proof}

\section{Circular components}\label{sec:main}

Let $Q$ denote the 8-element poset on the left of diagram \eqref{lattice};
the 4 corners are minimal elements, and the 4 side-vertices are maximal.
The geometric realization $BQ$ of $Q$ has a vertex for each element of $Q$
and an edge for each strict inequality; there are no higher simplices
because the poset $Q$ has no chains $q_0<q_1<q_2$. Thus $BQ$
is an octagon, homeomorphic to a circle.
The sub-posets of $\Str$ we have described fit into %the diagram
the right-hand diagram below,
where the arrows indicate inclusion.
\begin{equation}\label{lattice}
\xymatrix@R=1.0em{
0X \ar[r]\ar[d] & 0 & X1 \ar[l]\ar[d] \\
00 & Q & 11 \\
X0 \ar[r]\ar[u] & 1 & X1 \ar[l]\ar[u]
}\quad
\xymatrix@R=1.0em{
\Str_{0X} \ar[r]\ar[d] & \Str_0 & \Str_{X1} \ar[l]\ar[d] \\
\overline\Str_{00} && \overline\Str_{11} \\
\Str_{X0} \ar[r]\ar[u] & \Str_1 & \Str_{1X} \ar[l]\ar[u]
}\end{equation}
%See \eqref{eq:00} and \eqref{eq:11}.

Define $f:\Str\to Q$ by sending elements of $\Str_{0X}$, $\Str_{0X}$, 
$\Str_{1X}$ and  $\Str_{X1}$ to the corresponding 
minimal vertices of $Q$, as indicated by \eqref{lattice};
strings in $\Str_{00}$ and $\Str_{11}$ are sent to the
vertices indicated by 
$\overline\Str_{00}$ and $\overline\Str_{11}$,
respectively. The strings in $\Str_0$ not in $\Str_{0X}$ or $\Str_{X1}$
(resp., in $\Str_1$ not in $\Str_{X0}$ or $\Str_{1X}$)
are sent to the other maximal vertices of $Q$, as indicated.
It is clear that $f$ is a poset morphism.

Recall from \cite[IV.3.2.3]{WK} that for $q\in Q$, 
the comma category $f/q$ has objects the pairs $(s,q)$,
where $f(s)\le q$, i.e., $s\in\Str$ such that $f(s)\le q$; 
and there is a morphism from $(s',q)$ to $(s,q)$ 
if and only if $s'\le s$ in $\Str$. 

\begin{lemma}
The right-hand side of diagram \eqref{lattice} is the diagram
of the comma categories $f/q$ for $q\in Q$.
\end{lemma}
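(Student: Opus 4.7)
The plan is a direct case-by-case unpacking of the two definitions involved: $f/q$ has objects $\{s\in\Str : f(s)\le q\}=\bigcup_{q'\le q} f^{-1}(q')$, so for each of the eight elements $q\in Q$ I would read the principal down-set $Q_{\le q}$ off the left-hand side of \eqref{lattice}, take the corresponding union of preimages under $f$, and match the result against the subposet labelled at the same position on the right-hand side.

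For the four minimal corners $q\in\{0X,X1,X0,1X\}$, we have $Q_{\le q}=\{q\}$, so $f/q=f^{-1}(q)$. By the prescription that defines $f$, these four preimages are $\Str_{0X}$, $\Str_{X1}$, $\Str_{X0}$, $\Str_{1X}$ respectively, matching the four outer vertices of the right-hand diagram.

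Now for the four maximal vertices. Reading the left diagram, $Q_{\le 00}=\{00,0X,X0\}$, so
\[
f/(00)=\Str_{00}\cup\Str_{0X}\cup\Str_{X0}=\overline\Str_{00},
\]
the last equality being the very definition of $\overline\Str_{00}$; the case $q=11$ is symmetric. For $q=0$, $Q_{\le 0}=\{0,0X,X1\}$, and since $f$ sends all of $\Str_0\setminus(\Str_{0X}\cup\Str_{X1})$ to the vertex $0$, the union of the three preimages is all of $\Str_0$; the case $q=1$ is symmetric.

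Finally, the arrows of the right-hand diagram are forced: $q'\le q$ in $Q$ implies $Q_{\le q'}\subseteq Q_{\le q}$, hence $f/q'\hookrightarrow f/q$, and these are exactly the eight inclusions shown. There is no serious obstacle here---the lemma is essentially bookkeeping---but one point to keep in mind during verification is that \eqref{eq:00}--\eqref{eq:11} place $\Str_{X0}$ inside $\Str_1$ (not $\Str_0$) and $\Str_{X1}$ inside $\Str_0$ (not $\Str_1$); this ``twist'' is exactly what makes the two sides of \eqref{lattice} match at the top-right and bottom-left corners.
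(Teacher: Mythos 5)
Your proposal is correct and follows essentially the same route as the paper's proof: the minimal vertices give $f/q=f^{-1}(q)=\Str_q$ tautologically, and the maximal vertices are handled by taking unions of preimages over the down-sets, using the definition of $\overline\Str_{00}$, $\overline\Str_{11}$ and of $f$ on $\Str_0$, $\Str_1$, together with the containments in \eqref{eq:00}--\eqref{eq:11}. Your bookkeeping is in fact slightly more careful than the printed proof (which writes $f^{-1}(X1)$ where $f^{-1}(X0)$ is meant in the $q=00$ case), and the functoriality remark about the inclusions $f/q'\subseteq f/q$ is the same implicit point the paper relies on.
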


\begin{proof}\label{lem:f/q}
For the minimal elements $q=ab$ of $Q$, it is a tautology that
$f/q = f^{-1}(q) = \Str_{q}$. Since $\overline\Str_{00}$ is
the union of $f^{-1}(00)$, $f^{-1}(0X)$ and $f^{-1}(X1)$,
we see that $f/0=\overline\Str_{00}$; by symmetry we also have
$f/1=\overline\Str_{11}$. The definition of $f$ on $\Str_0$
and $\Str_1$ ensures that we also have
$f/0=\Str_0$ and $f/1=\Str_1$.
\end{proof}

\goodbreak
\begin{theorem}\label{thm B}
$B\Str$ is homotopic to the circle $S^1$.
\end{theorem}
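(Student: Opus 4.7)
The plan is to apply Quillen's Theorem A to the poset morphism $f\colon\Str\to Q$ constructed above. Recall that Theorem A guarantees $Bf$ is a homotopy equivalence provided every comma category $f/q$ has a contractible classifying space. By Lemma \ref{lem:f/q}, the eight comma categories $\{f/q:q\in Q\}$ are precisely the subposets appearing on the right-hand side of \eqref{lattice}, and each of them has already been shown to be contractible: the four corner fibers $\Str_{0X}$, $\Str_{X0}$, $\Str_{1X}$, $\Str_{X1}$ by Proposition \ref{p:0X}; the two middle-edge fibers $\overline\Str_{00}$, $\overline\Str_{11}$ by Proposition \ref{p:00}; and the two remaining maximal-vertex fibers $\Str_0$, $\Str_1$ by the first (unlabelled) proposition of Section \ref{sec:homotopy}.

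Theorem A then gives that $Bf\colon B\Str\to BQ$ is a homotopy equivalence, so it suffices to identify $BQ$ with $S^1$. Since $Q$ has four minimal and four maximal elements and no chains of length greater than $1$, the nerve $BQ$ is a one-dimensional simplicial complex: one vertex per element of $Q$ and one edge per strict inequality. Inspecting the left-hand diagram of \eqref{lattice}, each maximal vertex lies above exactly two minimal ones, and each minimal vertex below exactly two maximal ones, so the edges trace out a single octagonal cycle. Hence $BQ$ is homeomorphic to $S^1$, and composing with the homotopy equivalence $Bf$ yields $B\Str\simeq S^1$.

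The substantive work has already been carried out in Section \ref{sec:homotopy}, in the contractibility proofs of Propositions \ref{p:00} and \ref{p:0X}, whose moves $F_1$ and $F_\ell$ deformation-retract each fiber onto a point. Once those are granted, Theorem \ref{thm B} is essentially a formal consequence of Quillen's Theorem A together with the elementary combinatorial observation that the nerve of $Q$ is an octagon; no new obstacle arises at this stage, and the main bookkeeping step is simply confirming that the identifications $f/q$ made in Lemma \ref{lem:f/q} really exhaust all the required contractibility input.
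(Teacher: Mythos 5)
Your proposal follows exactly the paper's argument: apply Quillen's Theorem A to the poset map $f\colon\Str\to Q$, using Lemma \ref{lem:f/q} to identify the comma categories with the sub-posets of diagram \eqref{lattice}, whose contractibility is supplied by the unlabelled proposition of Section \ref{sec:homotopy} together with Propositions \ref{p:00} and \ref{p:0X}, and then note that $BQ$ is an octagon, hence a circle. This is the same proof as in the paper, correctly assembled.
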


\begin{proof}
Quillen's Theorem~A says that if the geometric realization of
every $f/q$ is contractible, then $Bf: B\Str\to BQ \simeq S^1$ 
is a homotopy equivalence (see \cite[IV.3.7]{WK}).
By \eqref{eq:00}, \eqref{eq:11}, Propositions \ref{p:00} and \ref{p:0X}, 
the geometric realizations of all the $f/q$ are contractible.
\end{proof}

Combining Lemma \ref{lem:tpz'} with
Theorems \ref{thm A} and \ref{thm B}, we obtain:

\begin{corollary}\label{cor:circle}
If $P$ is a typical persistence diagram,
every connected component
%If $z$ is a typical point, the connected component $C(z)$ 
of $\data_P$ is homotopy equivalent to $S^1$.
\end{corollary}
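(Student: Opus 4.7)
The plan is to combine the two main structural results already established: Theorem~\ref{thm A} identifies the component of $\data_P$ through a typical point $z$ with the classifying space $B\Str$, and Theorem~\ref{thm B} identifies $B\Str$ with $S^1$ up to homotopy. The only remaining issue is to verify that every connected component of $\data_P$ contains at least one typical point, so that Theorem~\ref{thm A} applies componentwise.

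First I would fix an arbitrary connected component $C$ of $\data_P$ and produce a typical point $z\in C$. Lemma~\ref{lem:tpz'} supplies a typical point in $\data_P$, but a priori it may lie in a different component, so a componentwise upgrade is needed. The cleanest route is through the polytopal decomposition of Section~\ref{sec:polytopes}: $\data_P$ is covered by the polytopes $T(s)$ for $s\in\Str(N,M)$, the top-dimensional cells correspond to the maximal strings (those with $\dim(s)=N-2M$), and the interior of any such $T(s)$ consists entirely of typical points, since the fixed extremal bit-values are distinct by typicality of $P$, while each $X$-block parameterizes a strict monotone interpolation between two such values. By Lemma~\ref{lem:glb} every cell of the decomposition is a face of some top-dimensional cell, so the component $C$ must meet the interior of at least one top-dimensional $T(s)$, and any point of that open set gives the desired typical $z\in C$.

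Next I would apply Theorem~\ref{thm A} to $z$ to obtain a homeomorphism $C=C(z)\cong B\Str$, and then Theorem~\ref{thm B} to obtain $B\Str\simeq S^1$; the composition gives $C\simeq S^1$, proving the corollary. The only genuinely nontrivial step is the density argument in the previous paragraph; everything else is a one-line invocation of the preceding theorems. I expect an alternative route via the general fact that the non-typical locus $\{z:z_i=z_j \text{ for some } i\ne j\}$ is a finite union of hyperplanes in $\R^N$ would also work, but leveraging the polytopal structure already in hand is quicker and keeps the argument entirely inside the framework built up in Sections~\ref{sec:polytopes} and \ref{sec:homotopy}.
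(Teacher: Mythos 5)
Your argument is correct and follows the same route as the paper, which simply cites Lemma~\ref{lem:tpz'} together with Theorems~\ref{thm A} and~\ref{thm B}; the extra work you do --- verifying that \emph{every} connected component of $\data_P$ actually contains a typical point so that Theorem~\ref{thm A} can be applied componentwise --- is a gap the paper leaves implicit, and your use of the polytopal decomposition from Section~\ref{sec:polytopes} is a reasonable way to fill it.

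One small overstatement to fix: the interior of a top-dimensional cell $T(s)$ does \emph{not} consist entirely of typical points. Within one $X$-block the coordinates are strictly monotone and strictly between the two adjacent critical values, and the critical values themselves are distinct by typicality of $P$; but an $X$-coordinate in one block can coincide with an $X$-coordinate in a different block (for instance, two $X$'s flanking a shared local maximum both take a common value just below it), or with a non-adjacent critical value lying inside its interval. What is true is that the non-typical locus meets the interior of $T(s)$ in a finite union of proper affine slices, hence is nowhere dense there, so typical points are dense in the interior of each top cell --- which is all your argument needs. Alternatively, the hyperplane observation you mention in your last sentence (the non-typical locus is a finite union of hyperplanes in $\R^N$, while each top cell is a nonempty open subset of a coordinate affine subspace not contained in any of them) gives the same conclusion cleanly.
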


\goodbreak %%%%%%%%%%%%%%%%%%%%%%%%%%%%%%%%%%%%%%%%%%%%%%%
\section{$Y$-shaped configurations}\label{sec:Y} 

In this section we show that $\data_P$ can still be homotopic to a circle, 
even for rooted trees with three branches 
and persistence diagram $P=\{ (0,\infty), (1,4) \}$ for $H_0$.
(The choice of $0<1<4$ is for concreteness.)

%Consider a rooted tree with three branches (labelled $A, B, C$),
%each of length at least $3$; its vertex set $V$ has at least $7$ points.
%\longcorrl If we agree to my changes, then some notation for vertices %in the lemmas need to be changed. KM
%<<
%or simplicity, we work with the minimal example, with a vertex set
%V$ of 7 elements.
%Figure \eqref{eq:K-turn} illustrates six points %three points
%$z_A, z_B, z_C,... $ in $\R^V$ with persistence diagram $P$.
%persistence diagram $P=\{ (0,\infty), (1,4) \}$ for $H_0$.
%The arrows in \eqref{eq:K-turn} go from larger to smaller values 
%of $z$, and the values of the vertices labelled 'X' are unimportant.)

For simplicity, we focus on the case where the tree $K$ has 
vertex set $V = \{ i\mid i=1,\ldots, 7 \}$  and edges 
\[
E=\{ [1,2],[2,3],[3,4],[4,5],[3,6],[6,7] \}.
\]
That is, the central vertex $3$ has degree 3, and the 
endpoints are vertices $1$, $5$, and $7$.
The three branches ($\alpha$, $\beta$, and $\gamma$) are generated by the vertices $\{1,2,3\}$, $\{3,4,5\}$, and $\{3,6,7\}$, respectively.

Figure \eqref{eq:K-turn} illustrates six points %three points
$z_A, z_B, ... $ in $\data_P$ with their directed graphs $\Gamma(z_A), \Gamma(z_B), \ldots$ (as in Section \ref{sec:prel}).
The critical coordinates are marked by their values $0$, $1$, and $4$.
The other $z_i$ are marked by $X$; they not critical coordinates and their exact value is unimportant.

\begin{equation}\label{eq:K-turn}
\xymatrix@R=0.7em@C=7.0pt{ 
  &&               & X\ar[dl] & X \ar[l] \\ 
0 & 4 \ar[l]\ar[r]& 1 & X \ar[l] & X \ar[l] \\ %& 8 \ar[l]\ar[r] & 2 \\
&& z_A%  &               & X\ar[u] & X \ar[l] \\ 
}\quad
\xymatrix@R=0.7em@C=7.0pt{
  &&               & X\ar[dl] & X \ar[l] \\
X \ar[r] & X \ar[r]& 0 & 4 \ar[l]\ar[r] & 1  \\
&& z_B
}\quad
\xymatrix@R=0.7em@C=7.0pt{
  &&               & 4\ar[r]\ar[dl] & 0  \\
X \ar[r] & X \ar[r]& 1 & X \ar[l] & X \ar[l] \\ 
&& z_C                       \\         
}
\end{equation}
\begin{equation*} % second row
\xymatrix@R=0.7em@C=7.0pt{  
  &&               & X\ar[dl] & X \ar[l] \\
1 & 4 \ar[l]\ar[r]& 0 & X \ar[l] & X \ar[l] \\ 
&& z_{A'} \hspace{-5pt} 
}\quad
\xymatrix@R=0.7em@C=7.0pt{
  &&               & X\ar[dl] & X \ar[l] \\
X \ar[r] & X \ar[r]& 1 & 4 \ar[l] \ar[r] & 0  \\ 
&&z_{B'} \hspace{-5pt} 
}\quad
\xymatrix@R=0.7em@C=7.0pt{
  &&               & 4\ar[dl] & 1 \ar[l] \\
X \ar[r] & X \ar[r]& 0 & X \ar[l] & X \ar[l] \\ 
&&z_{C'} \hspace{-5pt}                      \\
}\end{equation*}

%Here is a non-trivial loop in the preimage $\data_P$.
%It will turn out to be a generator of the fundamental group of 5$\data_P$.

\begin{construction}\label{K-turn}
Consider the point $z_A\in\R^7$, illustrated on the left of \eqref{eq:K-turn}.
The extremal points $0$, $4$ and $1$ all lie on branch $\alpha$.
%\corrl KM
%<<
%As in \cite{CMW}, it is possible to continuously slide the $041$
%two places to the right, onto  branch B (to $z_B$).
%We can then slide the $041$ clockwise up, to $z_C$ on branch C, 
%and then down to the left branch, to get configuration $z_{A'}$, 
%where the 0 and 1 have switched places from $z_A$.
%||
In analogy with \cite{CMW}, it is possible to slide the $041$ two places to the right (to $z_B$ on branch $\beta$) without  
changing the directed edges on branch $\gamma$. 
%$$6\to 3$ and $7\to 6$.
%This results in $z_B$.
Beginning with $z_B$, we can slide $041$ clockwise up 
(to $z_C$ on branch $\gamma$) without 
changing the directed edges on branch $\alpha$.
%$1\to 2$ and $2\to 3$.
%This results in $z_C$.
We now slide 140 two places to the left 
(to $z_{A'}$ on branch $\alpha$) without changing the directed edges on branch $\beta$. %$4\to 3$ and $5\to 4$.
%This results in $z_{A'}$ where it is important to 

This maneuver (which resembles a clockwise 'K-turn' in a car)
results in the critical coordinates $0$ and $1$ switching places.
%>>
%(This maneuver resembles a clockwise 'K-turn' in a car.) 
Following this with a second K-turn ($z_{A'}$ to $z_{B'}$ to $z_{C'}$ to $z_A$)
returns us to the starting configuration.
\end{construction}

Construction \ref{K-turn} shows that the six points in \eqref{eq:K-turn} lie on a non-trivial loop;  it will turn out to be a generator of the fundamental group of $\data_P$.
While obvious, it is perhaps worth emphasizing that these sequences of slides are not possible on the interval or $N$-gon.
These new moves make it possible for the topology of $\data_P$ to be more interesting.

\medskip
Let $\branchA$ be the subspace of $\data_P$ consisting of all $z$ with $z_i=0$ for some vertex $i$ on branch $\alpha$, and $z_j\ne4$ for all vertices $j$ on branch $\alpha$ further from the central vertex than vertex $i$.
In Figure \eqref{eq:K-turn}, $z_A$, $z_B$ and $z_{C'}$ are in $\branchA$.

The subspaces $\branchB$ and $\branchC$ are defined similarly.

\begin{lemma}\label{lem:ABC} 
$\branchA$, $\branchB$ and $\branchC$ are contractible. 
\end{lemma}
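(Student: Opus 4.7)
By the $S_3$-symmetry of $K$ that permutes the three branches, it suffices to prove that $\branchA$ is contractible.

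The plan is to construct an explicit deformation retraction of $\branchA$ onto a single point by a sequence of sliding moves, paralleling the poset homotopies of Section~\ref{sec:homotopy} and the interval arguments of \cite{CMW}. In the first stage, I retract $\branchA$ onto the subspace $\branchA^\ast$ of points $z$ with $z_1=0$. For $z\in\branchA$ with the value $0$ at vertex $i\in\{2,3\}$, I slide the $0$ outward along branch $\alpha$ toward vertex $1$ through faces of adjacent polytopes $T(s)$ (as constructed in Section~\ref{sec:polytopes}), in which two adjacent vertices on branch $\alpha$ temporarily share the value $0$ via a two-sided edge. The defining condition of $\branchA$ --- that no vertex of branch $\alpha$ further from the center than $i$ carries the value $4$ --- is precisely what ensures that this slide does not cross the critical value~$4$. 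A short check using the persistence diagram rules out the value~$1$ sitting on branch $\alpha$ further from the center than $i$ (such a configuration would force an extra merge event), so each slide preserves the diagram throughout.

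In the second stage, with $z_1=0$ fixed, I retract $\branchA^\ast$ onto a point by two further rounds of sliding moves: one to bring the secondary minimum (value~$1$) to a canonical leaf, say vertex~$5$ on branch $\beta$, and one to fix the merging maximum~$4$ on the path between the two minima. With $z_1=0$, $z_5=1$, and $z_4=4$ all pinned, the remaining coordinates parametrize monotone chains between adjacent critical values along each segment of $K$, and hence form a product of simplices, which a straight-line homotopy contracts to a point.

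The principal obstacle is the case analysis in the second stage: the merging maximum~$4$ must lie on the unique path in $K$ between the two local minima, so as the $1$ is moved between branches $\beta$, $\gamma$, and the central vertex~$3$, the admissible locations for the $4$ change accordingly. Verifying that the sliding moves for the $1$ never collide with the $4$ --- so that the path remains in $\data_P$ --- requires a finite case analysis that ultimately reduces, branch by branch, to the contractibility of preimages on intervals established in \cite{CMW}.
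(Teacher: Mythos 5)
Your first stage is essentially the paper's first step (retract $\branchA$ onto the slice $\{z_1=0\}$ by lowering the values between the leaf and the outermost $0$), and it does work --- though not quite for the reason you give: the value $1$ \emph{can} sit on branch $\alpha$ further from the center than the $0$, e.g.\ $z=(1,0,2,4,1,3,3.5)$ lies in $\branchA$. What the diagram rules out is only that such an occurrence be a local minimum, since its merge saddle $4$ would then have to lie on branch $\alpha$ beyond the $0$, contradicting the definition of $\branchA$; a non-minimal $1$ out there is no obstruction to lowering values to $0$, so the retraction survives. The genuine gap is your second stage. You propose to slide the secondary minimum $1$ to a canonical leaf on branch $\beta$ while repositioning the saddle $4$, and you yourself flag that this ``requires a finite case analysis'' over the relative positions of the $1$ and the $4$, which you then defer to the interval results of \cite{CMW}. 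But that is exactly the delicate point: as the $1$ is carried from branch $\gamma$ through the central vertex to branch $\beta$, the $4$ must first be parked at vertex $2$ and then released, and one needs a single homotopy $H\colon\branchA\times[0,1]\to\branchA$, continuous in the initial point across all these regimes (including plateau configurations where the $1$ or the $4$ occupies two adjacent vertices). An appeal to ``contractible pieces glued along contractible intersections'' cannot settle this, because that is precisely the structure of all of $\data_P$, whose K-turn moves produce a circle rather than a point (Theorem \ref{thm:K-turn}); the step you defer is the very step that distinguishes $\branchA$ from $\data_P$, and it is not carried out.

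For comparison, the paper's proof avoids any repositioning of the $1$: after pinning $z_1=0$, it raises to $4$ the values at all vertices whose path from vertex $1$ avoids a vertex of value $1$ (one homotopy, defined uniformly in $z$, landing in $\{z_2=4\}$), and then lowers all remaining values except at vertices $1$ and $2$ down to $1$, ending at the single point $(0,4,1,\dots,1)$. Two further small inaccuracies in your sketch: ``the slide does not cross the critical value $4$'' is not the relevant issue (outer values may well pass through $4$ harmlessly, since those vertices have out-degree $1$ throughout); and after pinning $0,1,4$ at vertices $1,5,4$, the remaining coordinates form the region $\{0<z_2<z_3<\min(4,z_6),\; z_6<z_7\}$, which is contractible but is not a product of simplices of monotone chains between adjacent critical values (the branch-$\gamma$ values are only bounded below and may exceed $4$). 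These are fixable, but the stage-two case analysis is the heart of the lemma and must either be done in full or replaced by uniform homotopies of the paper's kind.
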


\noindent
Figure \eqref{eq:Br-A} illustrates the steps in the proof,
starting from $z_{C'}$.

\begin{proof}
By symmetry, it suffices to consider $\branchA$.
We shall use three steps to construct a deformation retraction of $\branchA$ to a point.  
By definition there exists a vertex $i_0$ closest to the endpoint $i_\alpha=1$ of branch $\alpha$ with $z_{i_0}=0$.
For the first step, continuously decrease the value of $z_j$ to $0$ for every vertex between $i_\alpha$ and $i_0$ (and do nothing if $i_0=i_\alpha$).
This is a deformation retraction onto the subspace of all $z'$ which are 0 at $i_\alpha$. 

For the second step of the homotopy, given a point with $z_{i_\alpha}=0$, consider the set of vertices $j$ 
such that the path from vertex $i_\alpha$ to vertex $j$ does not contain 
a vertex $k$ with $z_k=1$. We can continuously change the values of $z_j$ at all
these vertices to $4$. This is a deformation retraction to the subspace
of all $z''$ where $z''=4$ at vertex 2 
(the vertex adjacent to $i_\alpha=1$).

Finally, given $z''$, we can continuously decrease the value of $z''_j$ to $1$ for all vertices $j$ other than 
$i_\alpha$ and its neighbor.
The result is deformation retraction to the point $z=(0,4,1,...,1)$, 
showing that $\branchA$ is contractible.
\end{proof}

\begin{equation}\label{eq:Br-A}
\xymatrix@R=0.7em@C=7.0pt{
  &&               & 4\ar[dl]\ar[r] & 1  \\
0 & 0 \ar[l]\ar[r]& 0 & X \ar[l] & X \ar[l] \\ %& 8 \ar[l]\ar[r] & 2 \\                            
}\quad
\xymatrix@R=0.7em@C=7.0pt{
  &&               & 4\ar[dl]\ar[r] & 1 \\
0 & 4 \ar[l]\ar[r]& 4 & 4 \ar[l] & 4 \ar[l] \\
}\quad
\xymatrix@R=0.7em@C=7.0pt{
  &&               & 1\ar[dl] & 1 \ar[l] \\
0 & 4 \ar[l]\ar[r]& 1 & 1 \ar[l] & 1 \ar[l] \\
}
\end{equation}
\smallskip

We write $\branchAC$ for $\branchA\cap\branchC$.
It is the subspace of all $z$ where $z=0$ at the central vertex, 
while the vertex with $z=1$
(and hence the vertex with $z=4$) lies on branch $\beta$.
For example, $z_B$ is in $\branchAC$.
The subspaces $\branchAB$ and $\branchBC$ are defined similarly.

\begin{lemma}\label{lem:AC}
$\branchAC$, $\branchAB$ and $\branchBC$ are contractible.
\end{lemma}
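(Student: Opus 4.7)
By symmetry of the $Y$-tree, it will suffice to prove that $\branchAC$ is contractible; the claims for $\branchAB$ and $\branchBC$ then follow by relabelling the three branches. My plan is to rigidify three of the seven coordinates by a finite amount of case analysis, and then retract the remaining four coordinates onto a point by a straight-line homotopy inside a convex region.

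First I would identify the forced coordinates. For any $z\in\branchAC$, the global minimum value $0$ must sit at a vertex common to branches $\alpha$ and $\gamma$, which forces $z_3=0$. The persistence pair $(1,4)$ is therefore realised entirely on branch $\beta$: since vertex $3$ is occupied by the value $0$ and vertex $5$ is the unique leaf of $\beta$, the merge point at value $4$ can only be the interior degree-two vertex, giving $z_4=4$, and the remaining local minimum at value $1$ is forced to the leaf, giving $z_5=1$. Thus these three coordinates are constant throughout $\branchAC$.

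Next I would describe the remaining freedom. The only free parameters are $z_1,z_2$ on branch $\alpha$ and $z_6,z_7$ on branch $\gamma$, and a short inspection of the sublevel filtration shows that preserving $\PH_0(z)=P$ amounts to the inequalities $z_1\ge z_2\ge 0$ and $z_7\ge z_6\ge 0$: violating $z_1\ge z_2$ creates a spurious leaf-born component at vertex $1$ (hence an extra persistence point), violating $z_2\ge 0$ moves the global minimum away from value $0$, and symmetrically on branch $\gamma$.

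Finally, the resulting parameter region
\[
\{(z_1,z_2)\in\R^2 : z_1\ge z_2\ge 0\}\times\{(z_6,z_7)\in\R^2 : z_7\ge z_6\ge 0\}
\]
is a product of two closed convex wedges, hence itself convex. A straight-line homotopy to a fixed basepoint, for instance $(3,2,0,4,1,2,3)$, is then a deformation retraction of $\branchAC$ onto a point, because convexity of the region guarantees that every intermediate configuration still satisfies the defining inequalities and therefore still lies in $\data_P$. The main obstacle I foresee is the rigidification step, which requires ruling out every alternative placement of the three distinguished values $0,1,4$ on the $Y$-tree — including non-typical configurations in which several $z_i$ coincide — by direct inspection of the possible sublevel filtrations.
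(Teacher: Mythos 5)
Your overall strategy — pin down the forced critical coordinates, parameterize the remaining ones by explicit inequalities, and retract along straight lines using convexity — is exactly what the paper is doing, just spelled out in more detail; the paper's proof is the one-line observation that $\branchAC$ consists of the points of the form $z_B$ (i.e.\ $z_3=0$, $z_4=4$, $z_5=1$ with monotone tails on branches $\alpha$ and $\gamma$). So the route matches, but there is a genuine gap in the middle step.

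The gap is in your sentence ``The persistence pair $(1,4)$ is therefore realised entirely on branch $\beta$.'' The hypothesis $z_3=0$ does not force this. Nothing you have used so far rules out, say, $z_1=1$, $z_2=4$, $z_3=0$: here the merge vertex is $v_2$ on branch $\alpha$, the pair $(1,4)$ is produced on $\alpha$, and $\PH_0(z)$ is still $\{(0,\infty),(1,4)\}$. What excludes this configuration is precisely the second clause in the definition of $\branchA$: the vertex $i$ on $\alpha$ with $z_i=0$ must have $z_j\ne 4$ for every $j$ on $\alpha$ further from the center, and with $z_2=4$ the only candidate is $i=3$, which fails. (One also needs to note that the merge vertex cannot be a leaf, since a leaf has out-degree $\le 1$, and cannot be $v_3$ since $z_3=0$.) So the argument that $z_4=4$, $z_5=1$ has to go through $\branchA\cap\branchC$, not just through $z\in\data_P$ and $z_3=0$. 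A related point: once you do invoke that clause, note that, taken at face value, it also excludes points of your proposed region with $z_1=4$ or $z_2=4$ (and similarly on $\gamma$), even when those coordinates are non-critical; the ``wedge'' would then not literally be convex (it would be cut by the hyperplanes $z_j=4$). The paper's figure description suggests the intended reading is that the excluded coordinates are \emph{critical} coordinates equal to $4$, under which your convex-wedge argument is fine, but as written you should acknowledge and resolve this rather than let it pass silently.
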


\begin{proof}
By symmetry, it suffices to consider $\branchAC$.
For the 7-vertex tree, $\branchAC$ consists of just the
points of the form $z_B$, 
illustrated by the second diagram of \eqref{eq:K-turn}.
In particular, $\branchAC$ is contractible.
\end{proof}

\begin{subremark}\label{rem:longer}
The proofs of Lemmas \ref{lem:ABC} and \ref{lem:AC}
go through for
longer $Y$-shaped trees, i.e., rooted trees with a
central vertex of degree 3 with 3 linear branches of
length $\ge2$ attached to it. (The vertices $1,2,3$ are 
at the end of branch $\branchA$.)
\end{subremark}

%\begin{subremark}
%For larger star-like trees with 3 branches, $\branch_{AC}$
%is still contractible.  This follows from the main result of
%\cite{CMW}, applied to branch B: $\branch_{AC}$ is homotopic to the
%subspace for which branch B has the form $041XX\cdots X$.
%\end{subremark}

By inspection, every point in the preimage $\data_P$
lies in one of the subspaces $\branch_q$. Since 
the intersection of any two branches is contractible,
we see that the preimage $\data_P$ is path-connected.

Let $Q$ denote the 6-element poset on the left of diagram
\eqref{hexagon}; the elements $\alpha,\beta,\gamma$ are maximal 
and the others are minimal. Thus $BQ$ is a hexagon, 
homeomorphic to the circle $S^1$.

Consider the topological poset $\branch$ of pairs $(x,q)$ with
$x\in \branch_q$, illustrated by  the right of \eqref{hexagon}.
It is clear that there is a poset morphism
$f:\branch\to Q$ sending elements $(x,q)$ to $q$.
%\edit{$ABC \to \alpha\beta\gamma$}
\begin{equation}\label{hexagon}
\xymatrix@R=1.0em{
\alpha  \ar[r]\ar[d]  & \alpha\gamma   & \gamma \ar[l] \ar[d] \\
\alpha\beta  &\beta  \ar[l]\ar[r] & \beta\gamma\\
}\quad
\xymatrix@R=1.0em{
\branchA  \ar[r]\ar[d]  & \branchAC  & \branchC  \ar[l]\ar[d]\\
\branchAB & \branchB  \ar[r]\ar[l]&\branchBC \\
}\end{equation}

\begin{lemma}\label{|Br|}
%As in the proof of \cite[Thm.\,2.13]{CMW}, 
The geometric realization of $\branch$ is homotopy equivalent to $\data_P$.
%(This is also the argument proving Theorem \ref{thm A} above.)
\end{lemma}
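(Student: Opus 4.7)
The plan is to realize $|\branch|$ as the homotopy colimit of the six-term diagram of closed subspaces $\{\branch_q\}_{q\in Q}$, and to compare it with the ordinary colimit $\data_P = \bigcup_q \branch_q$ via a natural projection.

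\emph{Description of $|\branch|$.} The topological poset $\branch$ is the Grothendieck construction applied to the diagram $D\colon Q\to\mathrm{Top}$, $q\mapsto \branch_q$, whose morphisms are the inclusions $\branch_q\hookrightarrow \branch_{q'}$ for $q\le q'$. Its geometric realization is therefore the standard homotopy colimit,
\[
|\branch|\;=\;\bigsqcup_{q_0<\cdots<q_n\,\text{in}\,Q}\branch_{q_0}\times\Delta^n\,/\!\sim .
\]
Because $Q$ has no strictly ascending chain of three elements, only $0$-simplices $\bigsqcup_{q}\branch_q$ and $1$-simplices (the six cylinders $\branch_q\times[0,1]$, indexed by pairs $q<q'$ in $Q$) occur. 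The ``forget $t$'' map $(x,t)\mapsto x$ defines a continuous surjection $p\colon|\branch|\to\data_P$, using $\data_P=\bigcup_q\branch_q$, as noted just before the lemma.

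\emph{Homotopy equivalence.} The map $p$ is exactly the canonical comparison from $\mathrm{hocolim}_{q\in Q}\branch_q$ to $\mathrm{colim}_{q\in Q}\branch_q = \data_P$. By the standard hocolim-vs-colim comparison, it is a weak equivalence whenever each structure map $\branch_q\hookrightarrow \branch_{q'}$ is a cofibration. In our setting, each $\branch_q$ is a closed union of faces in the natural polyhedral decomposition of $\data_P$ (compare \cite{LT}), so each inclusion is the inclusion of a subcomplex, hence a cofibration; thus $p$ is a weak equivalence, and a fortiori a homotopy equivalence of CW complexes.

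\emph{Main obstacle, and a self-contained alternative.} The sole technical point is the verification that the structure maps are cofibrations. To sidestep any appeal to the polyhedral structure, one can argue directly: since $|\branch|$ has only $0$- and $1$-simplices, it is assembled from the six vertex spaces $\branch_q$ by attaching six mapping cylinders. Collapsing each cylinder onto its ``top'' end (the image of $\branch_q\hookrightarrow \branch_{q'}$) is a strong deformation retraction; six applications of the gluing lemma for homotopy equivalences show that the composite is still a homotopy equivalence whose target is the pushout $\bigcup_q\branch_q=\data_P$. A third option, should one prefer a fiber-theoretic argument, analyzes $p^{-1}(x)=|N_x|$ with $N_x=\{q\in Q:x\in\branch_q\}$: inspection of $Q$ (recalling that the triple intersection $\branchA\cap\branchB\cap\branchC$ is empty) shows that $N_x$ is either a single vertex or a ``V,'' hence contractible, and a Vietoris--Begle-style argument yields the equivalence. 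By any route, $|\branch|\simeq\data_P$.
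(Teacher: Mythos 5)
Your proposal is correct and follows essentially the same route as the paper: both identify $|\branch|$ as a union of the $\branch_q$ and the mapping cylinders of the inclusions $\branch_{q'}\hookrightarrow\branch_q$ (there are no higher simplices since $Q$ has no chains of length three, i.e., each minimal $q'$ lies below exactly two maximal $q$), and both collapse these cylinders onto $\data_P$. The paper dismisses the verification that the collapse is a homotopy equivalence with ``it is easy to see''; your gluing-lemma argument (route (b)), or equivalently the hocolim-vs-colim comparison (route (a)), is precisely the standard justification being alluded to, so you have supplied the missing detail rather than taken a genuinely different path.
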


\begin{proof}
For each $q$, the realization $|\branch|$
contains a subspace homeomorphic to
$\branch_q$, and for each $q'<q$ the realization contains the mapping cylinder of the inclusion $\branch_{q'}\subset\branch_q$.
Thus there is a natural map from $|\branch|$ 
onto $\data_P$.
Since $\branch_{q'}$ is a subspace of two subspaces $\branch_{q}$, 
it is easy to see that $|\branch|\to \data_P$
is a homotopy equivalence.
\end{proof}

\begin{theorem}\label{thm:K-turn}
The preimage $\data_P$ is homotopic to $S^1$.
\end{theorem}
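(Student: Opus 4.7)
The plan is to run the same Quillen's Theorem~A argument that worked in Theorem~\ref{thm B}, using the hexagonal poset $Q$ displayed in \eqref{hexagon} and the morphism $f\colon\branch\to Q$ sending $(x,q)\mapsto q$. By Lemma~\ref{|Br|}, $|\branch|\simeq \data_P$, and $BQ$ is (the realization of) a hexagon, hence homotopy equivalent to $S^1$. So the theorem reduces to showing that $Bf$ is a homotopy equivalence, for which it suffices, by Quillen's Theorem~A, to verify that the realization of each comma category $f/q$ is contractible.

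First I would treat the three minimal elements of $Q$, namely $\alpha\beta$, $\alpha\gamma$, and $\beta\gamma$. For minimal $q$, an object $(s,q')$ of $f/q$ must satisfy $f(s)\le q$, which forces $f(s)=q$, so $f/q=\branch_{q}$. These are contractible by Lemma~\ref{lem:AC}. Next I would treat the three maximal elements, say $\alpha$ (the cases of $\beta$ and $\gamma$ follow by symmetry). The objects of $f/\alpha$ are those $(x,q')$ with $q'\le \alpha$, i.e.\ with $q'\in\{\alpha,\alpha\beta,\alpha\gamma\}$, so $f/\alpha$ is the full sub-poset of $\branch$ over these three elements. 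Applying the same mapping-cylinder argument used in the proof of Lemma~\ref{|Br|} to this restricted diagram, I get that $|f/\alpha|$ is homotopy equivalent to the union $\branchA\cup\branchAB\cup\branchAC$ of subspaces of $\data_P$. But $\branchAB$ and $\branchAC$ are already contained in $\branchA$ by the definitions in Section~\ref{sec:Y}, so this union is just $\branchA$, which is contractible by Lemma~\ref{lem:ABC}.

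With all six comma categories shown contractible, Quillen's Theorem~A gives that $Bf\colon |\branch|\to BQ$ is a homotopy equivalence, and composing with Lemma~\ref{|Br|} and the identification $BQ\simeq S^1$ yields the theorem.

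The main obstacle I anticipate is the maximal-element case: one has to check carefully that the sub-poset $f^{-1}(\{\alpha,\alpha\beta,\alpha\gamma\})$ really does realize to $\branchA\cup\branchAB\cup\branchAC=\branchA$, rather than to a nontrivial homotopy colimit. The argument is the same as in Lemma~\ref{|Br|}, relying on the fact that the inclusions $\branchAB\hookrightarrow\branchA$ and $\branchAC\hookrightarrow\branchA$ are honest inclusions of subspaces, so the mapping cylinders deformation retract onto $\branchA$; but this step is the content-carrying one, and it is where the geometry of the $K$-turn in Construction~\ref{K-turn} has been quietly encoded.
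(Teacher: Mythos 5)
Your proposal is correct and follows essentially the same route as the paper: reduce via Lemma~\ref{|Br|}, apply Quillen's Theorem~A to $f\colon\branch\to Q$, handle the minimal elements of $Q$ by Lemma~\ref{lem:AC}, and handle the maximal ones by retracting the comma category onto $\branch_q$ and invoking Lemma~\ref{lem:ABC}. The only difference is presentational: where you collapse the mapping cylinders onto $\branchA$ (using $\branchAB,\branchAC\subset\branchA$), the paper realizes the natural transformation $(x,q'\le q)\Rightarrow(x,q)$ as a deformation retraction of $B(f/q)$ onto $B(f^{-1}q)$, which is the same argument.
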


\begin{proof}
By Lemma \ref{|Br|}, it suffices to show that $f$ induces
a homotopy equivalence $|\branch|\to BQ\simeq S^1$.
By Quillen's Theorem~A, it suffices to show that (the realization of)
each comma category $f/q$ is contractible.

The comma category $f/q$ is the poset of all pairs
$(x,q'\le q)$ with $x\in\branch_{q'}$. If $q$ is minimal in $Q$,
$f/q=f^{-1}(q)=\branch_q$, which is contractible by 
Lemma \ref{lem:AC}. 
If $q$ is maximal, we still have $f^{-1}(q)=\branch_q$ 
but $f/q$ contains elements $(x,q'<q)$.
%for $x\in \branch_{q'}$ %and $(x,q_{AC}\to A)$ for $q'<q$. 
The geometric realization of the natural transformation 
$\eta:(x,q'\le q)\Rightarrow(x,q)$ is a deformation retraction
from $B(f/q)$ to the subspace $B(f^{-1}q)$,
which is contractible by Lemma \ref{lem:ABC}.
\end{proof}

%For each minimal element $q\in\{AB, BC, CA\}$ of $Q$, we also have
%$f/q=f^{-1}(q)\cong\branch_q$. For the maximal elements $A,B,C$
%We also have $f^{-1}(q)\cong\branch_q$, but the
%comma category $f/A$ is the mapping cylinder of 
%$\branch_{AB}\to \branch_A \leftarrow \branch_{AC}$,
%is $\branch_A$, which is contractible by Lemma \ref{lem:ABC};
%similarly, $f/B\cong\branch_B$ and $f/C\cong\branch_C$ are contractible.
% As in the proof of Theorem \ref{thm B}, 
%Quillen's Theorem~A implies that $Bf:\branch\to BQ\simeq S^1$
%is a homotopy equivalence.

%\begin{remark}
%For simplicity, we have assumed each of the branches has length 2.
%When the branches are longer, we can use the contractibility results
%of \cite{CMW} to obtain the same result (for this $P$):
%each components of $\data_P$ is homotopic to a circle.
%\end{remark}

%%%%%%%%%%%%%%%%%%%%%%%%%%%%%%%%%%%%%%%%%%%%%%%%%%%%%%
\section{Star-like configurations}\label{sec:starlike}

In this section, we generalize from $Y$-shaped trees to star-like trees,
i.e., trees with a central vertex of degree $n$ and $n$ linear branches
of length at least~2 attached to it. 
(The $Y$-shaped trees of Section \ref{sec:Y} form the case $n=3$.)

For $q=1,...,n$, let $\branch_q$ be the subspace of all $z$ with $z_i=0$ for some vertex on branch $q$, and $z_j\ne4$ for all vertices $j$ on branch $q$ further from the central vertex than $i$.
The proof of Lemma \ref{lem:ABC} goes through to show that $\branch_q$ is contractible for each $q=1,...,n$.

Set $\branch'_q = \bigcap_{p\ne q}\branch_p$;
it is the subspace of all $z$ where $z=0$ at the central vertex, 
while there is a vertex with $z=1$
(and hence a vertex with $z=4$) lies on branch $q$.
The proof of Lemma \ref{lem:AC} goes through to
show that each $\branch'_{q}$ is contractible.

Since each point of $\data_P$ lies on one of the
branches, which are contractible,
and each $\branch'_q$ is contractible, 
$\data_P$ is path-connected.

\begin{theorem}\label{thm:starlike}
$\data_P$ is homotopy equivalent to a bouquet 
$\bigvee S^1$ of $(n^2-3n+1)$ circles.
\end{theorem}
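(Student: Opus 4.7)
The plan is to adapt the proof of Theorem~\ref{thm:K-turn} to the star-like case. The key combinatorial input is a poset $Q$ with $2n$ elements: $n$ maximal elements $\alpha_q$ and $n$ minimal elements $\nu_r$ (with $q,r\in\{1,\dots,n\}$), where $\nu_r<\alpha_q$ precisely when $r\ne q$. Since $Q$ has no chain of length $\ge 2$, $BQ$ is the bipartite graph on $\{\alpha_q\}\sqcup\{\nu_r\}$ in which $\nu_r$ is joined to every $\alpha_q$ with $q\ne r$. This graph is connected and has $2n$ vertices and $n(n-1)$ edges, so $\chi(BQ)=2n-n(n-1)=3n-n^2=1-(n^2-3n+1)$, and hence $BQ\simeq\bigvee^{n^2-3n+1}S^1$.

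Following Section~\ref{sec:Y}, I would next form the topological poset $\branch$ of pairs $(z,q)$ with $q\in Q$ and $z\in\branch_q$ (where $\branch_{\alpha_q}:=\branch_q$ and $\branch_{\nu_r}:=\branch'_r$), ordered by the partial order inherited from $Q$, and let $f\colon\branch\to Q$ be the projection. The analog of Lemma~\ref{|Br|} should hold and give $|\branch|\simeq\data_P$. The crucial observation is that for $n\ge4$ the pairwise intersection $\branch_p\cap\branch_q$ is no longer connected but decomposes as $\bigsqcup_{r\ne p,q}\branch'_r$, since any $z$ in it must have $z=0$ at the center and $z=4$ on some branch $r\notin\{p,q\}$. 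These components are exactly enumerated by the minimal elements $\nu_r$ of $Q$, so the fiber of $|\branch|\to\data_P$ over a point $z\in\branch'_r$ is the order complex of $\{\nu_r\}\cup\{\alpha_q:q\ne r\}$, a contractible star with $n-1$ leaves; over a point of $\data_P$ lying in only one $\branch_q$ the fiber is a single vertex.

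Next, I would apply Quillen's Theorem~A to $f$ exactly as in the proof of Theorem~\ref{thm:K-turn}. For a minimal element the comma category $f/\nu_r=f^{-1}(\nu_r)$ is $\branch'_r$, contractible by the remark preceding the theorem. For a maximal element the natural transformation $(z,q'\le\alpha_q)\Rightarrow(z,\alpha_q)$ gives a deformation retraction of $B(f/\alpha_q)$ onto $B(f^{-1}(\alpha_q))=\branch_q$, which is also contractible. Hence $|\branch|\simeq BQ\simeq\bigvee^{n^2-3n+1}S^1$.

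The main obstacle, as in the Y-case, is the analog of Lemma~\ref{|Br|}: one must verify that enlarging the cover of $\data_P$ from $\{\branch_q\}$ to $\{\branch_q,\branch'_r\}$ fully captures the disconnectedness of the multi-branch intersections, so that the projection $|\branch|\to\data_P$ really is a homotopy equivalence and no further minimal pieces need to be added to $Q$. Once this bookkeeping is in place, the Euler-characteristic count together with Quillen~A completes the argument.
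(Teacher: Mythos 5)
Your proposal is correct and follows essentially the same route as the paper: the same $2n$-element poset $Q$, the same Euler characteristic computation $\chi = 2n - n(n-1) = 3n-n^2$ giving $BQ\simeq\bigvee^{n^2-3n+1}S^1$, the same topological poset $\branch$ with projection $f\colon\branch\to Q$, and the same application of Quillen's Theorem~A via contractibility of the comma categories $f/q$. Your explicit identification of $\branch_p\cap\branch_q$ as $\bigsqcup_{r\ne p,q}\branch'_r$ and the accompanying fiber analysis of $|\branch|\to\data_P$ make the analog of Lemma~\ref{|Br|} more transparent than the paper's terse treatment, but this is a clarification of the same argument rather than a different one.
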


When $n=3$, this yields 1 circle, as in Theorem \ref{thm:K-turn};
for $n=4$ branches, $\data_P$ is homotopy
equivalent to a bouquet of 5 circles.

\begin{proof}
Consider the poset $Q$ whose elements
%Consider the bipartite graph $\Gamma$ whose vertices 
are the $2n$ branches $\branch_q$ and $\branch'_q$,
with $\branch'_q < \branch_p$ for every $p\ne q$.
The realization of this poset is a bipartite graph $\Gamma$ such that every vertex of $\Gamma$ has degree $n-1$. 
Since $\Gamma$ has $2n$ vertices and $n^2-n$ edges, its Euler characteristic is 
$$\chi=V-E=3n-n^2.$$
Since $\Gamma$ is connected, and 
$\chi=\dim H_0(\Gamma)-\dim H_1(\Gamma)$,
$\Gamma$ is homotopy
equivalent to a bouquet of 
$1-\chi= (n^2-3n+1)$ circles.

Consider the topological poset $\branch$ of pairs
$(x,s),$ where $x\in \branch_s$, and pairs
$(x,s'),$ where $x\in \branch'_s$;
there is an obvious poset morphism $f:\branch\to Q$,
and hence a map $|\branch|\to\Gamma$.
The proof of Lemma \ref{|Br|} goes through
(with 'two' replaced by $n-1$) to show that
$|\branch|$ is homotopy equivalent to $\data_P$.
Finally, the proof of Theorem \ref{thm:K-turn}
goes through to show that 
$|\branch|\to \Gamma$ is a homotopy equivalence.
(One uses the version of Quillen's Theorem~A for
the realization of topological categories;
see \cite[IV.3.9]{WK}.)
The homotopy equivalence of the theorem follows.
\end{proof}

%________________________________________________________________
%%%% BIBLIOGRAPY %%%%%%%%%%%%%%%%%%%%%%%%%%%%%%%%%%%%%%%%%%%%%%%%
\bigskip

\end{document}